\documentclass[11pt,a4paper]{article}
\usepackage{cite}
\usepackage{amsmath}
\usepackage{amsthm}
\usepackage{amssymb}
\usepackage{amscd}
\usepackage{graphicx}
\usepackage{epsfig}
\usepackage{color}
\usepackage{mathrsfs}
\usepackage{mathtools}
\usepackage{amsfonts}
\usepackage{stmaryrd}
\usepackage{hyperref}

\newcommand{\X}{\mathfrak{X}}

\DeclarePairedDelimiter{\gen}{\langle}{\rangle}
\newcommand{\Cinfty}{\mathscr{C}^\infty}

\usepackage[left=2.5cm,right=2.5cm,top=3cm,bottom=3.5cm]{geometry}
\usepackage[colorinlistoftodos]{todonotes} 

\newtheorem{theorem}{Theorem}[section]
\newtheorem{proposition}[theorem]{Proposition}
\newtheorem{lemma}[theorem]{Lemma}
\newtheorem{corollary}[theorem]{Corollary}

\theoremstyle{definition}
\newtheorem{example}[theorem]{Example}
\newtheorem{counterexample}[theorem]{Counterexample}
\newtheorem{remark}[theorem]{Remark}
\newtheorem{definition}[theorem]{Definition}

\newcommand*{\NN}{\mathbb{N}}

\newcommand*{\ZZ}{\mathbb{Z}}

\newcommand*{\RR}{\mathbb{R}}
\newcommand*{\R}{\mathbb{R}}

\newcommand{\be}{\begin{equation}}
\newcommand{\ee}{\end{equation}}

\newcommand{\bea}{\begin{eqnarray}}
\newcommand{\eea}{\end{eqnarray}}
\newcommand{\beas}{\begin{eqnarray*}}
\newcommand{\eeas}{\end{eqnarray*}}
\newcommand{\w}{{\mathsf w}}

\newcommand{\mn}{{\medskip\noindent}}

\newcommand{\no}{{\noindent}}

\newcommand\restr[2]{{
  \left.\kern-\nulldelimiterspace 
  #1 
  \right|_{#2} 
}}
\newcommand{\T}{{\mathsf T}} 
\newcommand{\sT}{{\mathsf T}}
\newcommand{\cT}{\T^{\ast}} 

\newcommand*{\dd}{\mathrm{d}}

\DeclareMathOperator{\rank}{rank}
\DeclareMathOperator{\corank}{corank}
\DeclareMathOperator{\class}{class}
\newcommand*{\contr}[1]{\iota_{#1}}
\newcommand*{\liedv}[1]{\mathcal{L}_{#1}}
\newcommand*{\Reeb}{R}

\DeclareMathOperator{\Sec}{Sec}

\let\Im\Image

\newcommand{\n}{\nabla}
\newcommand{\pa}{\partial}
\def\sV{{\mathsf V}}

\def\End{\mathsf{End}}
\def\Lin{\mathsf{Lin}}

\def\la{\langle}
\def\ran{\rangle}

\def\xd{\mathrm{d}}
\newcommand{\dt}{{\xd_{\sT}}}
\newcommand{\dts}{\xd_{\sT^*}}
\newcommand{\za}{\alpha}
\newcommand{\zb}{\beta}
\newcommand{\zf}{\varphi}
\newcommand{\we}{\wedge}
\newcommand{\zi}{\iota}
\newcommand{\zw}{\omega}

\newcommand{\red}{\mathrm{red}}
\begin{document}
\title{\textbf{A Darboux classification of homogeneous\\ Pfaffian forms on graded manifolds}\footnote{The research of JG was funded by the National Science Centre (Poland) within the project WEAVE-UNISONO, No. 2023/05/Y/ST1/00043.}}
\date{}
\author{\\  \Large Janusz Grabowski
        \\ \Large Asier L\'opez-Gord\'on\\ \\
          {\it Institute of Mathematics}\\
                {\it Polish Academy of Sciences}
}
\maketitle
\begin{abstract}\noindent We study the local classification problem for differential Pfaffian forms on a supermanifold $M$ that are homogeneous with respect to a given homogeneity structure on $M$. The most familiar examples of homogeneity structures are those associated with vector bundle structures. Our aim is to show that, for a homogeneous form of fixed degree, there exist homogeneous Darboux coordinates. As a consequence, we obtain Darboux-type normal forms for homogeneous Pfaffian forms, recovering as special cases the classical Darboux theorem together with its contact and presymplectic counterparts.

To formulate an analogue of Darboux classification in the supergeometric setting, we associate to a differential form $\za$ the characteristic distribution
$\chi(\za)=\ker(\za)\cap\ker(\xd\za)$, and define the class of $\za$ as the rank of $\chi(\za)$. We prove that, under suitable regularity and constant-rank assumptions, this distribution completely determines the local equivalence problem for homogeneous Pfaffian forms. Our results apply equally well to ordinary (purely even) manifolds.

\medskip\noindent
{\bf Keywords:} Pfaffian form; Darboux coordinates; Darboux theorem; supermanifold; homogeneity; distribution.
\par

\smallskip\noindent
{\bf Mathematics Subject Classification:} 58A10; 58A15; 58A17; 58A50; 58C50; 53D05; 53D10
\end{abstract}

\section{Introduction}
The celebrated local characterizations of contact and symplectic forms due to Darboux can, in fact, be viewed as special cases of a more general characterization of Pfaffian forms (i.e., nowhere-vanishing $1$-forms) of \emph{constant class}, in the terminology of Darboux.

\mn A Pfaffian form $\za$ on a manifold $M$ of dimension $n$ is said to be \emph{of constant class $(2s+1)$} (with $s=0,1,\ldots$) if
$$
\za\we\big(\xd\za\big)^s\neq 0\, , \qquad \big(\xd\za\big)^{s+1}=0\, .
$$
Darboux proved that such a form can always be written locally in the form
\be\label{e1}
\za=\xd x^0-\sum_{k=1}^s z^k \xd x^k,
\ee
for a suitable system of local coordinates
$$
\big(x^0,\dots,x^s,z^1,\dots,z^s,y^1,\dots,y^{n-2s-1}\big).
$$

\mn If $\za$ is \emph{of constant class $(2s+2)$}, i.e.,
$$
\za\we\big(\xd\za\big)^s(x)\neq 0,\qquad
\big(\xd\za\big)^{s+1}(x)\neq 0,\qquad
\za\we\big(\xd\za\big)^{s+1}(x)=0\, ,
$$
then $\za$ can be locally written as
$$
\za=z^0 \xd x^0-\sum_{k=1}^s z^k \xd x^k,
$$
for a suitable system of local coordinates
$$
\big(x^0,\dots,x^s,z^0,\dots,z^s,y^1,\dots,y^{n-2s-2}\big).
$$

\mn In particular, a nowhere-vanishing closed $1$-form has constant class $1$, so that \eqref{e1} reduces in this case to the Poincar\'e lemma. On the other hand, if $\dim M=2s+1$, then a contact form is precisely a Pfaffian form of constant class $(2s+1)$.

\mn Of course, Darboux's original characterization of the class does not extend directly to supermanifolds, where top-degree forms generally do not exist. For instance, if $\xi$ is an odd coordinate, then
$$
(\xd \xi)^s\neq 0
$$
for all $s=1,2,\dots$. A way around this difficulty is provided by an alternative definition of the class:
$$
\class(\za)\coloneqq \corank\big(\chi(\za)\big),
$$
where
$$
\chi(\za)=\ker(\za)\cap\ker(\xd\za)
$$
is the characteristic distribution of $\za$. This definition already appears in the work of Godbillon \cite{Godbillon:1969} (see also \cite{Grabowska:2024}) and applies naturally to supermanifolds. Moreover, it makes sense for arbitrary differential forms, not only Pfaffian forms. Conceptually, it clarifies that the class measures the degree of non-integrability of the distribution $\ker(\za)$, with the extreme cases corresponding to closed forms and contact forms, whose kernels are completely integrable and maximally non-integrable, respectively.

\mn In this paper, we study differential forms on supermanifolds that are additionally \emph{homogeneous}. Since our results are local, a homogeneity structure may be represented locally by a \emph{weight vector field}, i.e., a vector field of the form
\be\label{wvf}
\n=\sum_i w_i\,x^i\pa_{x^i},
\ee
in suitable local coordinates $(x^i)$, where $w_i\in\R$. A standard example is the Euler vector field
$$
\n=\sum_i y^i\pa_{y^i}
$$
associated with a vector bundle and written in bundle coordinates $(x^a,y^i)$. The weight vector field determines the notion of homogeneity for tensor fields: the coordinate $x^i$ is homogeneous of weight $w_i$, and a tensor field $K$ is said to be homogeneous of weight $w\in\R$ if
$$
\liedv{\n}K=wK.
$$
Here $\liedv{}$ denotes the Lie derivative. Note that the weights $w_i$ are allowed to be arbitrary real numbers, which goes far beyond the usual $\NN$- or $\ZZ$-gradings.

\mn All these notions extend naturally to supermanifolds with local coordinates $(x^i)$, some even and some odd. The weight vector field is always even. In addition to the weight, one also has parity, so homogeneous tensors of \emph{degree}
$$
\lambda=(\sigma,w)\in\ZZ_2\times\R
$$
are understood to be tensors of parity $\sigma$ and weight $w$.

\mn Homogeneous differential forms in this sense arise in many geometric situations. A particularly important class is provided by homogeneous symplectic structures on so-called \emph{N-manifolds}, introduced independently by Roytenberg \cite{Roytenberg:2002} and {\v{S}}evera \cite{Severa:2005} (see also \cite{Voronov:2002}). These are homogeneity supermanifolds whose local coordinates have non-negative integer weights compatible with parity. Such structures lead to many interesting geometric objects and applications. Already for weights restricted to the values $0,1,2$, Roytenberg obtained in \cite{Roytenberg:2002} an elegant description of \emph{Courant algebroids} \cite{Liu:1997} as Hamiltonian systems generated by an odd homological Hamiltonian satisfying
$$
\{H,H\}=0.
$$
The aim of this paper is to develop a local theory of homogeneous differential forms and, in particular, to establish Darboux-type theorems in terms of homogeneous coordinates. Homogeneous differential forms on vector bundles, studied by Tulczyjew and Urba\'nski in \cite{Tulczyjew:2000}, are polynomial. The existence of homogeneous Darboux coordinates is therefore a natural question, but one that is far from trivial. Indeed, if a homogeneous Pfaffian form has constant class $(2s+1)$, then the classical Darboux theorem guarantees a local representation of the form \eqref{e1}. However, the coordinates appearing there need not themselves be homogeneous. Can one always choose homogeneous Darboux coordinates?

\mn As an illustration, consider the cylinder
$$
M=\big\{(z,p,q)\in\R^3\,:\, p^2+q^2<1\big\},
$$
where the coordinates $(z,p,q)$ are assigned weights $(0,1,-1)$. The $1$-form
$$
\za=\xd z-p\big(2+\sin(pq)\big)\xd q
$$
is then a homogeneous contact form of weight $0$, although it is not polynomial. What are homogeneous Darboux coordinates for this form? The results of this paper provide answers to such questions for broad classes of homogeneous differential forms on general homogeneity supermanifolds.

\mn The paper is organized as follows. In Section~\ref{sec:homogeneity}, we review the basic notions related to homogeneity supermanifolds. Section~\ref{sec:VBs} contains a characterization of vector superbundles in terms of homogeneity structures, together with a Frobenius theorem for homogeneous distributions. In Section~\ref{sec:Darboux}, we prove Darboux theorems for homogeneous Pfaffian forms. Finally, Section~\ref{sec:examples} is devoted to examples and applications.

\section{Introduction to homogeneity supermanifolds}\label{sec:homogeneity}
In this section, we recall the main notions and results about homogeneity supermanifolds from \cite{Grabowska:2025} which will be employed subsequently. The fundamental concepts are: the degree of homogeneity of a tensor field and the homogeneity of a distribution associated with a \emph{weight vector field}. In addition, we present homogeneous versions of Poincar\'e Lemma and Frobenius Theorem. The reader not familiar with the language of supergeometry can skip the parts related to the parity; all works for standard (purely even) manifolds.

\mn Given an even vector field $\nabla$ on a supermanifold $M$, a (local) smooth superfunction $f$ on $M$ is called a (local) \emph{homogeneous function of weight $w\in\RR$} (with respect to $\n$) if $\nabla(f)=w\cdot f$. If, in addition, $f$ has the parity $\sigma\in \ZZ_2$, such a function is called \emph{homogeneous of degree $\lambda=(\sigma,w)\in\ZZ_2\times\RR$}, and we write $\w(f)=w$, $\deg(f)=\lambda$. The function $f$ (or the degree $\lambda$) is called \emph{even} if $\sigma=0$ and \emph{odd} if $\sigma=1$. Similarly, a tensor field $K$ is of weight $w\in\R$ if $\liedv{\n} K=w\cdot K$, where $\liedv{}$ denotes the Lie derivative. From properties of the Lie derivative it easily follows that the tensor product $K_1\otimes K_2$ of homogeneous tensor fields of weights $w_1$ and $w_2$ is homogeneous of weight $w_1+w_2$.

\begin{definition}
An even vector field $\nabla$ on a supermanifold $M$ with body (reduced manifold) $|M|$ is called a \emph{weight vector field} if, in a neighbourhood of every $p\in|M|$, there exists a system of local coordinates which are homogeneous with respect to $\n$. In other words, there is an atlas of charts $(U_\alpha, (x^i_\alpha))$ on $M$ such that
  \be\label{eq:weight_vector_field} \nabla=\sum_iw_i^\alpha \cdot x^i_\alpha\,\pa_{x^i_\alpha}\,\ee
  where $w_i^\alpha\in\RR$ represents the weight of the homogeneous coordinate $x^i_\alpha$.
  These coordinates are not assumed to vanish at $p$.

  A chart with homogeneous coordinates is called a \emph{homogeneity chart}. A supermanifold endowed with a weight vector field is called a \emph{homogeneity supermanifold}.
\end{definition}

\begin{example}
  Let $x$ be the canonical coordinate on $\RR$. The vector field $\nabla_1=x\,\pa_x$ is a weight vector field, whereas $\nabla_2=x^2\,\pa_x$ is not a weight vector field. Indeed, if $y=y(x)$ is a homogeneous coordinate of weight $w\in\RR$ in a neighbourhood of $0\in\RR$, then $y'(0)=a\neq 0$ and $\nabla_2(y)=x^2\cdot y'(x)=w\,y(x)$. The derivative of the left side is 0 at $0\in\RR$, which implies that $w=0$, but $x^2\,y'(x)=0$ has only constant solutions $y(x)$. Nevertheless, in a neighbourhood of $0\in\RR$ there exist non-constant homogeneous functions with respect to $\nabla_2$. For instance, the function
  $$f(x)=\begin{cases} 0 & \text{if} \quad x\le 0\\ e^{-1/x} & \text{if} \quad x>0\end{cases}$$
  is homogeneous with respect to $\nabla_2$ with weight 1.
\end{example}
\begin{example}\label{example:standard_superspace_Euler}
Consider the supermanifold $\R^{p|q}$ with the standard linear coordinates $(y^i)$ (some of them even and some odd). The \emph{Euler vector field}
$$\n^{p|q}=\sum_iy^i\pa_{y^i},$$
is a particular weight vector field, for which the coordinates $(y^i)$ are homogeneous of weight 1.

A useful observation is that the even vector field $\n^{p|q}$ is exactly the generator of the multiplication by positive reals in $\R^{p|q}$: if $h_t(y^i)=(e^ty^i)$
for $t\in\R$, then $h_t$ is a one-parameter group of diffeomorphisms of $\R^{p|q}$ and
$$\n^{p|q}=\restr{\frac{\xd}{\xd t}}{t=0}h_t.$$
Therefore, the Euler vector field $\n^{p|q}$ is completely determined by the multiplication by positive reals in $M=\R^{p|q}$. Actually, there is a unique smooth extension of the one-parameter group of diffeomorphisms $h_t$ into the action $\mathsf{h}_s(x^i)=(sx^i)$ of the monoid $(\R,\cdot)$ of multiplicative reals.

Like in the classical {Euler's Homogeneous Theorem}, we can prove that $f$ is a homogeneous function of weight 1 on $\R^{p|q}$ if and only if $f=\sum_i F_i{y^i}$ for some $F_i\in\R$. In other words, $f$ is a linear function in the standard sense. In other words, \emph{linear functions} on the supermanifold $\R^{p|q}$ are exactly smooth functions which are 1-homogeneous with respect to $\n^{p|q}$.
\end{example}

\no Let us recall that if an even vector field $X$ on a supermanifold $M$ of dimension $(n|m)$ vanishes at $p\in|M|$, then there is a well-defined differential $D_pX\in\End(\T_pM)\simeq\operatorname{gl}(n,m)$. Indeed, we can view $D_pX$ as $\T_pX:\T_p M\to\sV_{0_p}\T M$, identifying canonically the vertical part $\sV_{0_p}\T M$ of the tangent space $\T_{0_p}\T M$ with $\T_pM$. It is worth remarking that this works only because $X(p)=0$. In this case, the condition \eqref{eq:weight_vector_field} implies that $D_p\n$ is diagonalizable.

On the other hand, if $X(p)\neq 0$, then there exists a system of local coordinates $(x^a)$ around $x_0$ straightening $X$, namely, $X=\pa_{x^1}$. Hence, it is possible to construct a system of homogeneous coordinates $(y^a)$ with arbitrary degrees $\{w_a\}\subset \RR$, with the only restriction that $w_1$ is non-zero:
$$y^1 = e^{w_1\, x^1}\, , \quad y^i = e^{w_i\, x^1} x^i \, ,\quad i\geq 2\, .$$
Note that the system of straightening coordinates $(x^a)$ always exist for an even vector field around a point where it is non-zero. For an odd vector field $X$, it is additionally required to assume the integrability condition $[X,X]=0$ \cite{Shander:1980} (see also \cite[Chapter 4.4]{Manin:1997}). We refer to \cite[Chapter 6]{Carmeli:2011} and references therein for more details.

\begin{proposition}\label{proposition:homogeneous_coordinates}
An even vector field $\n$ on a supermanifold $M$ is a weight vector field if and only if $\n$ is locally linear around $m$ and $D_m\n$ is diagonalizable, for each $m\in|M|$ in the zero-locus of $\n$. In particular, non-vanishing even vector fields are weight vector fields.

\mn If, in local homogeneous coordinates $(x^i)$, we have
$$\nabla=\sum_{i=1}^nw_i\cdot x^i\,\pa_{x^i}\, ,$$
where $n$ is the total dimension of $M$, then:
\begin{itemize}
\item In the case $\nabla(m)=0$, all weights of systems of homogeneous coordinates around $m$ are
the same for each system of homogeneous coordinates, up to permutations among the weights of even and odd coordinates separately.
In particular, if $\,\nabla$ admits local homogeneous coordinates with weights in a subset $\Gamma\subset\RR$ (e.g., with integer weights), then all homogeneous coordinate systems around $m$ have weights in $\Gamma$ (have integer weights).

\item In the case $\nabla(m)\neq 0$, a vector $(w'_i)\in\RR^{n}$ consists of weights of a system of local homogeneous coordinates in a neighbourhood of $m$ if and only if not all weights of even coordinates are 0. In particular, for any $w,v_i\in\RR$, $i=2,\dots,n$, $w\neq 0$, there are coordinates $(x,z^i)$, $i=2,\dots, n$ in a neighbourhood of $m$, such that $x$ is even, $x(m)=1$, $z^i(m)=0$, and
    $$\n=w\cdot x\,\pa_x+\sum_{i=2}^nv_i\cdot z^i\,\pa_{z^i}\,.$$
A particular choice we will use in the sequel is $\n = x \pa_x$, with $x(m) =1$.
\end{itemize}
\end{proposition}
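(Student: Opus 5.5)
The plan is to treat the two cases $\n(m)=0$ and $\n(m)\neq 0$ separately, since in each case the proposition reduces to a fairly explicit normal-form argument.

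\emph{Zero-locus case, necessity.} If $\n$ is a weight vector field, then near $m$ it has the form $\sum_i w_i x^i\pa_{x^i}$ in some homogeneity chart. Since $\n(m)=0$, every coordinate with $w_i\neq 0$ vanishes at $m$. We may replace each coordinate with $w_i=0$ by $x^i-x^i(m)$; this keeps homogeneity, because a constant has weight $0$. In the new chart $\n$ is linear and diagonal, so $D_m\n=\operatorname{diag}(w_i)$ is diagonalizable.

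\emph{Zero-locus case, sufficiency.} If $\n$ is locally linear around $m$, then in some coordinates centred at $m$ we have $\n=\sum_{i,j}A^i_j x^j\pa_{x^i}$, with $A$ an even matrix (it preserves parity). Moreover $A$ represents $D_m\n$. If $A$ is diagonalizable, an even linear change of coordinates diagonalizes it, and this produces homogeneous coordinates.

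\emph{Uniqueness of the weights.} Take any homogeneity chart $(x^i)$ centred at $m$. Then $\T_m$ of the chart identifies $D_m\n$ with $\operatorname{diag}(w_i)$, and $D_m\n$ is intrinsic. So the multiset of weights of even coordinates is the spectrum of $D_m\n$ restricted to the even part of $\T_mM$, and similarly for the odd part. The statement about $\Gamma$ follows at once.

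\emph{Non-vanishing case.} The text before the proposition already supplies straightening coordinates $\n=\pa_{x^1}$ with $x^1$ even. Then $y^1=e^{w_1x^1}$ and $y^i=e^{w_ix^1}x^i$ are homogeneous of the prescribed weights with $w_1\neq0$, where the coordinate with nonzero weight is taken even. To get the normalization $x(m)=1$, $z^i(m)=0$, I first centre the straightening chart so that $x^1(m)=0$ and $x^i(m)=0$, and then set $x=e^{wx^1}$. Next I check that this is a valid coordinate change: its Jacobian at $m$ is triangular with nonzero diagonal entries. The special choice $\n=x\pa_x$ corresponds to $w=1$ and all $v_i=0$.

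\emph{Converse in the non-vanishing case.} If all even weights are $0$, then $\n(f)=\sum_i w_i x^i\pa_{x^i}f$ involves only odd coordinates. Hence its value at the point (reduction to the body) vanishes, since each $x^i$ with $w_i\neq0$ is odd and nilpotent. This forces $\n(m)=0$, a contradiction. More precisely, the tangent vector $\n(m)$ has components $w_ix^i(m)$, and $x^i(m)=0$ for odd $x^i$.

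\emph{Last claim of the first paragraph.} Non-vanishing even vector fields are weight vector fields: around points where $\n\neq0$ we use the straightening construction, and the zero-locus is empty.

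\emph{Main obstacle.} The step I expect to be most delicate is the meaning and use of ``locally linear'' in the zero-locus case. If it only means that $\n$ is linearizable by a coordinate change (a Sternberg-type condition), it must be taken as a hypothesis rather than proved. I would read it exactly that way, so the sufficiency direction reduces to linear algebra on the even matrix $A$. The only super-specific point is that diagonalization must be done by an even change of basis, which works because $A$ is block-diagonal with respect to parity.
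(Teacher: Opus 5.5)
Your proof is correct and follows the route the paper indicates in the paragraph preceding the proposition. At zeros of $\n$, a homogeneity chart gives $D_m\n=\operatorname{diag}(w_i)$, and conversely a parity-preserving linear change diagonalizes a linear $\n$. At non-zeros, one straightens to $\n=\pa_{x^1}$ with $x^1$ even and reweights via $y^1=e^{w_1x^1}$, $y^i=e^{w_ix^1}x^i$.

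The paper does not reproduce the full argument, since it recalls the result from earlier work. Your additional steps correctly fill in what is left: reading off the even and odd weight multisets as the spectra of $D_m\n$ on $(\T_mM)_0$ and $(\T_mM)_1$, and observing that $\n(m)=0$ whenever all even weights vanish, because odd coordinates vanish on the body.
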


\begin{remark}\label{remark:zero_homogeneous_functions}
  If $f$ is a function on a homogeneity supermanifold $(M, \nabla)$ with non-zero weight $\w(f)=w\in \RR$, then it vanishes at every point $x_0\in |M|$ at which $\nabla$ vanishes. Indeed,
  $$f(x_0) = \frac{1}{w} \left(\liedv{\nabla} f\right)  (x_0) =  \frac{1}{w} \left(i_{\nabla} \dd f\right) (x_0) = 0\,.$$
  In other words, if a smooth function $f$ is non-vanishing at $x_0$ and has weight $w=\w(f)$, then $w = 0$. This observation will be useful in several subsequent proofs.

  One of the consequences of this fact is that if a homogeneous one-form $\omega$ does not vanish at $x_0$, then $\w(\omega) \in \Gamma$, with $\Gamma\subset \RR$ the set of weights of any system of homogeneous coordinates $(x^a)$ around $x_0$ (see Proposition~\ref{proposition:homogeneous_coordinates}). Otherwise, all the coefficients of $\omega$ in the basis $(\dd x^a)$ would vanish at $x_0$. More generally, if $\beta$ is a homogeneous $k$-form non-vanishing at $x_0$, then
  $$\w(\beta) = \sum_{i=1}^k \lambda_i \, , \quad \lambda_i \in \Gamma\, .$$
  Similarly, if $X$ is a homogeneous vector field such that $X(x_0)\neq 0$, then $\w(X)\in - \Gamma$.
\end{remark}

\begin{definition}
  A \emph{homogeneous submanifold} of a homogeneity supermanifold $(M,\nabla)$ is a submanifold $N\subset M$ to which the weight vector field $\n$ is tangent. Consequently, $(N,\n\,\big|_N)$ is a homogeneity manifold itself.
\end{definition}

 \begin{definition}[Homogeneity category]
  A \emph{morphism} of a homogeneity supermanifold $(M,\nabla_M)$ into a  homogeneity supermanifold $(N,\nabla_N)$ is a morphism $\varphi:M\to N$ of supermanifolds that relates $\nabla_M$ with $\nabla_N$. In other words,
  the pullbacks of (local) homogeneous functions on $N$ are homogeneous on $M$, with the same degree. Homogeneity supermanifolds with these morphisms form the category $\mathsf {HSMan}$.
\end{definition}

\section{Vector superbundles}\label{sec:VBs}
Vector superbundles are fundamental examples of homogeneity supermanifolds. Moreover, the homogeneity approach to vector bundles is simpler and much more effective than the standard ones. It was originated in \cite{Grabowski:2009} and generalized to \emph{graded bundles} in \cite{Grabowski:2012}. A supergeometric version of the latter can be found in \cite{Jozwikowski:2016}.

\mn A \emph{vector superbundle of rank $(k,l)$} is a fiber superbundle $\tau:V\to M$ with the typical fiber $\R^{k|l}$, and such that the transition maps
$$\phi_{\za\beta}:U_\za\cap U_\beta\times\R^{k|l}\to U_\za\cap U_\beta\times\R^{k|l},$$
associated with local trivializations $\tau^{-1}(U_\za)\simeq U_\za\times\R^{k|l}$ of the fiber bundle structure, respect the \emph{Euler vector field}
$$\n^{k|l}_{U_{\za\zb}}=\sum_iy^i\pa_{y^i},$$
on $U_{\za\zb}\times\R^{k|l}$, where $(y^i)$ are the standard linear coordinates on $\R^{k|l}$ viewed as a part of coordinates $(x^a,y^i)$ on $U_{\za\zb}\times\R^{k|l}$. Here, $U_{\za\zb}=U_\za\cap U_\beta$. In other words, $\n^{k|l}_{U_{\za\zb}}$ is tangent to the fibers, where it coincides with $\n^{k|l}$. This gives rise to a globally defined Euler vector field $\n_V$ on $V$ which is a particular weight vector field, so $(V,\n_V)$ is a homogeneity supermanifold. A \emph{vector subbundle} in $V$ is a homogeneous submanifold of the latter, generally supported on a submanifold of $M$.

\mn \emph{Linear functions} on $V$ are just smooth 1-homogeneous (super)functions. \emph{Basic functions}, in turn, are homogeneous functions with weight 0 -- the pullbacks of (super)functions on $M$. The space $\Lin(V)$ of linear functions is canonically a bi-module over $\Cinfty(M)$, and every linear function $f$ has two linear local forms,
$$f=F_i(x)\cdot y^i=y^i\cdot G_i(x).$$
On supermanifolds, generally $F_i\ne G_i$, as (super)functions do not commute. In other words, the left- and the right-module structures are generally different.

\mn As the transition functions map linear functions into linear ones, they take the form
$$\phi_{\za\beta}\big(x^a,y^i\big)=\Big(x^a,\big[\phi_{\za\beta}\big]^i_j(x)\cdot y^j\Big).$$
The \emph{dual vector superbundle}, $\pi:V^*\to M$, is obtained from the same local trivializations with local linear coordinates $(x^a,z_i)$, but with the \emph{dual transition maps} $\phi^*_{\za\zb}$,
$$\phi^*_{\za\beta}\big(x^a,z_i\big)=\Big(x^a, z_j\cdot \big[\phi_{\zb\za}\big]^j_i(x)\Big).$$
This implies that the local pairing
$$\big\langle A^i(x)\cdot z_i;y^j\cdot B_j(x)\big\rangle=A^i(x)B_i(x)$$
between linear functions on $V^*$ and $V$ is a well defined global map
\be\label{lp}\big\langle\cdot ;\cdot\big\rangle: \Lin(V^*)\times\Lin(V)\to \Cinfty(M),\ee
that has a tensorial character,
$$\langle F(x)f;g\rangle=F(x)\langle f;g\rangle,\quad \langle f;gF(x)\rangle=\langle f;g\rangle F(x).$$
Of course, $V^{**}=V$.

\mn In the standard terminology, \emph{sections} of the vector bundle $\tau:V\to M$ are understood as linear functions on the dual bundle $V^*$, $\Sec(V)=\Lin(V^*)$, and this identification is usually denoted $X\mapsto \zi_X$. Note that sections in this sense cannot be generally identified with morphisms $X:M\to V$ such that $\tau\circ X=\operatorname{id}_M$.
The local free generators $(z_i)$ of the $\Cinfty(M)$-module $\Lin(V^*)$ are viewed as a \emph{basis} $(e_i)$ of local sections of $V$. The \emph{dual basis} consists of $(e^i_*=y^i)$. We have
$$\langle e_i,e^j_*\rangle=\delta^j_i,$$
so the pairing (\ref{lp}) is understood as the contraction
$$i_X\zw=\zi_X(\zw)=\big\langle \zi_X,\zi_\zw\big\rangle.$$

\begin{example} A canonical example of a vector superbundle is the \emph{tangent bundle} $\sT M$ of a supermanifold $M$ of dimension $(k,l)$. It is defined as a vector superbundle $\tau_M:\sT M\to M$ with the typical fiber $\R^{k|l}$. With every coordinate chart $\zf_\za:U_\za\to\R^{k|l}$ on $M$ with corresponding (super)coordinates $(x^i_\za)$, we associate a local trivialization
$$\phi_\za:\tau_M^{-1}(U_\za)\to U_\za\times\R^{k|l}$$
and the associated coordinates $(x^i_\za,\dot x^j_\za)$, declaring the transition functions
$\phi_{\za\zb}=\phi_\za\circ\phi_\zb^{-1}$ to be
\begin{align*}
& x_\za=\zf_{\za\zb}(x_\zb); \\
& \dot{x}_\za^i = \sum_j \frac{\partial x^i_\za}{\partial x^j_\zb}(x) \, \dot{x}^j_\zb\, ,
\end{align*}
where $\zf_{\za\zb}=\zf_\za\circ\zf_\zb^{-1}$ are the transition maps for the supermanifold $M$. It is easy to see that these transition maps are linear in $(\dot x^i)$, so they respect the Euler vector field
$$\n=\sum _i\dot x^i\cdot\pa_{\dot x^i}.$$
The system of coordinates $(x^i,\dot x^j)$ on $\sT M$ is called \emph{adapted} to the system of coordinates $(x^i)$ on $M$.

\mn The dual bundle is the \emph{cotangent bundle} $\sT^*M$, with the adapted coordinates $(x^i,p_j)$. In the standard terminology, $p_j$ are denoted $\pa_{x^j}$, and form a local basis of the $\Cinfty(M)$-module of vector fields on $M$. This concerns both module structures on the space of vector fields. Similarly, $\dot x^i$ are denoted $\xd x^i$, providing a local basis of the bi-module of differential forms on $M$.

\mn By a \emph{distribution} $D$ of rank $k$ on $M$ we will understand a vector subbundle in $\T M$ of rank $k$, i.e., freely generated by $k$ nowhere-vanishing vector fields.
A distribution $D$ is called \emph{even} (respectively, \emph{odd}) if it is locally freely generated by even (respectively, odd) vector fields. This characterization does not depend on the choice of local free generators \cite{Grabowska:2025}. Even and odd codistributions are defined analogously.
\end{example}

\begin{theorem}[Lifts of homogeneity structures]
Let $M$ be a homogeneity supermanifold with the weight vector field $\nabla$, which in homogeneous coordinates reads
$$\nabla=\sum_aw_a\cdot x^a\pa_{x^a}\,.$$
Then, $\sT M$ and $\sT^*M$ are canonically homogeneity supermanifolds with the weight vector fields written in the adapted coordinates as
\be\label{tl}\dt\nabla=\sum_aw_a\left(x^a\pa_{x^a}+\dot x^a\pa_{\dot x^a}\right)\ee
and
\be\label{cl}\dts\nabla=\sum_aw_a\left(x^a\pa_{x^a}-p_a\pa_{p_a}\right)\,.\ee
\end{theorem}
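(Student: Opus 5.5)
The plan is to define the lifts intrinsically and then compute them in adapted coordinates. For $\dt\nabla$ I take the complete (tangent) lift of $\nabla$: if $\phi_t$ is the local flow of $\nabla$ (even, so it exists as a one-parameter family of supermanifold morphisms), then $\sT\phi_t$ is a one-parameter group of automorphisms of the vector superbundle $\sT M$, and $\dt\nabla$ is its generator. For $\dts\nabla$ I take the generator of $(\sT\phi_{-t})^*=\sT^*\phi_t$ acting on $\sT^*M$; equivalently, it is the Hamiltonian vector field of the linear function $\zi_\nabla$ with respect to the canonical symplectic form. These definitions need no coordinates, so the resulting vector fields are global and canonical. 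For the purposes of the theorem, it also suffices to check that the coordinate expressions \eqref{tl} and \eqref{cl} transform correctly between charts. That reduces to showing that $\dt\nabla$ and $\dts\nabla$ are characterized by algebraic properties that do not depend on the chart.

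\emph{Characterization.} A vector field $Y$ on $\sT M$ equals $\dt\nabla$ if and only if two conditions hold: $Y(f)=\nabla(f)$ for basic functions $f$ (pulled back from $M$), and $Y(\xd f)=\xd(\nabla f)$ for the linear functions $\xd f=\sum_a\dot x^a\pa_{x^a}f$. Both conditions are intrinsic. Basic functions together with the functions $\xd f$ generate the function sheaf locally, so these conditions determine $Y$ uniquely. On $\sT^*M$, the analogous characterization is $Y(f)=\nabla(f)$ on basic functions and $Y(\zi_X)=\zi_{[\nabla,X]}$ on linear functions $\zi_X$, $X$ a vector field on $M$. This condition is intrinsic as well and forces uniqueness.

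\emph{Computation in homogeneous coordinates.} With $\nabla=\sum_a w_ax^a\pa_{x^a}$, we have $\nabla(x^a)=w_ax^a$, so $\xd(\nabla x^a)=w_a\dot x^a$. Hence \eqref{tl} satisfies $Y(x^a)=w_ax^a$ and $Y(\dot x^a)=w_a\dot x^a$. Since $\dot x^a=\xd x^a$ and both defining conditions are derivation-compatible (here one must verify the Leibniz compatibility of $f\mapsto\xd f$ with signs in the super case), agreement on the generators $x^a,\dot x^a$ suffices. For \eqref{cl}, the linear coordinates are $p_a=\zi_{\pa_{x^a}}$. We compute $[\nabla,\pa_{x^a}]=-w_a\pa_{x^a}$, since $\pa_{x^a}(w_bx^b)=w_a\delta^b_a$ and $\nabla$ is even. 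This gives $Y(p_a)=-w_ap_a$, which matches \eqref{cl}.

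\emph{Weight vector field property.} Finally, both lifts are weight vector fields because the adapted coordinates $(x^a,\dot x^a)$ and $(x^a,p_a)$ of a homogeneity chart are themselves homogeneous. Their weights are $(w_a,w_a)$ and $(w_a,-w_a)$ respectively, so homogeneity charts on $M$ induce homogeneity atlases on $\sT M$ and $\sT^*M$.

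The main obstacle is the sign bookkeeping in the super setting. One needs that $\xd$ commutes with the even derivation $\nabla$, i.e.\ $\liedv{\nabla}\xd=\xd\liedv{\nabla}$ (Cartan's formula, valid since $\nabla$ is even). One also needs that $\zi_{[\nabla,X]}$ is compatible with the bimodule structure, namely $[\nabla,FX]=\nabla(F)X+F[\nabla,X]$ with no sign since $\nabla$ is even. I would check these identities first, because they guarantee that the prescriptions on generators extend to well-defined even derivations of the structure sheaves.
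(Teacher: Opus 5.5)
Your proposal is correct. The paper gives no proof of this statement; it quotes it from \cite{Grabowska:2025} and only remarks that $\dt\nabla$ and $\dts\nabla$ are the complete lifts of $\nabla$ to $\sT M$ and $\sT^*M$. Your argument is exactly the verification that remark points to: you define the lifts intrinsically as complete lifts, pin them down by their action on basic and linear functions (which gives chart-independence and gluing), compute them on the adapted coordinates of a homogeneity chart, and note that these adapted coordinates are homogeneous of weights $(w_a,w_a)$ and $(w_a,-w_a)$.
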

\no Let us observe that the lifted weight vector fields $\dt\nabla$ and $\dts\nabla$ are particular cases of the complete lifts of vector fields on $M$ to the tangent $\sT M$ and cotangent $\sT^*M$ bundle, as defined in the literature in the purely even case (see \cite{Grabowski:1995,Yano:1966}). On supermanifolds, the lifted vector fields are defined, e.g., in \cite[Definition 5.2]{Grabowska:2025}.
Moreover, since $\dt \nabla$ is locally linear, it also defines a weight vector field on the shifted tangent bundle $\T[1]M$ of $M$.

\begin{example}\label{example:nabla_differential_forms}
  Consider a manifold $M$ with local coordinates $(x^i)$ and the weight vector field $\n =\sum_i x^i\pa_{x^i}$. Its tangent lift is given by
  $$\dt \n =\sum_i\left(x^i\pa_{x^i} + \dot x^i\pa_{\dot x^i} \right)\, .$$
  By changing the parity of the fiber coordinates, we can define an associated vector field on  the shifted tangent bundle $\T[1]M$:
  $$\overline \n = \sum_i\left(x^i\pa_{x^i} + \xi^i\pa_{\xi^i} \right)\, ,$$
  which is also a weight vector field. The algebra of smooth superfunctions on $\T[1]M$ is isomorphic to the algebra $\Omega^\bullet(M)$ of differential forms on $M$. This fact allows identifying $\xi^i \equiv \dd x^i$ for each $i$, and consequently
  $$\overline \n = \sum_i\left(x^i\pa_{x^i} + \xi^i\pa_{\xi^i} \right)
  \equiv \sum_i \left(x^i \liedv{\pa_{x^i}} + \dd x^i\wedge \contr{\pa_{x^i}}\right)
  = \liedv{\sum_i x^i \pa_{x^i}} = \liedv{\n}\, .
  $$
  Thus, $\n$ induces the weight vector field $\liedv{\n}$ on the supermanifold $(M, \mathcal{A}=\Omega^\bullet(M))$.
\end{example}

\begin{definition}
A distribution $D\subseteq \T M$ (respectively, a codistribution $D\subseteq \cT M$) on a homogeneity supermanifold $(M, \nabla)$ is called \emph{homogeneous} if $\dd_{\T} \nabla$ (respectively, $\dd_{\cT} \nabla$) is tangent to $D$, i.e., $D$ is a homogeneous submanifold in $\sT M$ (resp., $\sT^*M$).
\end{definition}

\mn Clearly, a distribution is homogeneous if and only if its annihilator is a homogeneous codistribution.
The following two theorems from \cite{Grabowska:2025} characterize homogeneous distributions.

\begin{theorem}\label{homdist}
Let $D\subset\sT M$ be a distribution on a homogeneity manifold $(M, \nabla)$. Then, $D$ is  homogeneous if and only if $D$ is locally generated by homogeneous vector fields.
\end{theorem}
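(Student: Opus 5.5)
One direction is immediate. Suppose $D$ is locally generated by homogeneous vector fields $X_1,\dots,X_k$ with $\liedv{\nabla}X_j=w_jX_j$. A vector field $X$ corresponds to the fibrewise linear function $\zi_X$ on $\sT^*M$. The complete lift acts on such functions by $\dts\nabla(\zi_X)=\zi_{[\nabla,X]}$; this is the standard property of complete lifts, and it can be checked directly in adapted coordinates from \eqref{cl}. Hence $\dts\nabla(\zi_{X_j})=w_j\zi_{X_j}$.

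The annihilator $D^\circ\subset\sT^*M$ is the submanifold cut out by the equations $\zi_{X_j}=0$. These defining functions are homogeneous, so $\dts\nabla$ preserves their ideal and is therefore tangent to $D^\circ$. This means $D^\circ$ is a homogeneous codistribution, and so, by the remark preceding the theorem, $D$ is homogeneous. Alternatively, one can work on $\sT M$ directly: parametrize $D$ by $(x,s)\mapsto \sum_j s^jX_j(x)$ and note that $\dt\nabla$ is tangent to its image, because in this parametrization it reads $\dt\nabla=\nabla-\sum_j w_js^j\pa_{s^j}$, up to the linear Euler terms.

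For the converse, assume $\dt\nabla$ is tangent to $D$, and split into cases at a point $m\in|M|$.

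\textbf{Case $\nabla(m)\neq0$.} By Proposition~\ref{proposition:homogeneous_coordinates}, choose coordinates with $\nabla=x\pa_x$ and $x(m)=1$. Then $\log x=:t$ gives $\nabla=\pa_t$. Take any local generators $Y_j$ of $D$. Tangency of $\dt\nabla$ means $[\nabla,Y_j]\in\Gamma(D)$, so $[\pa_t,Y_j]=\sum_i A^i_j Y_i$. Solve the linear ODE $\pa_t G=-GA$ along $t$ with $G=\mathrm{Id}$ on the slice $t=0$; this has even coefficients and is solvable in the super setting too. The fields $X_j=\sum_i G^i_jY_i$ then satisfy $[\nabla,X_j]=0$. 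They are homogeneous of weight $0$ and still generate $D$.

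\textbf{Case $\nabla(m)=0$.} This is the main obstacle. Here the strategy is to linearize at $m$. The operator $\liedv\nabla$ preserves the module $\Gamma(D)$, because of tangency, and it also preserves the ideal of functions vanishing at $m$. So it induces an endomorphism of the fibre $D_m$, which is diagonalizable because $D_m\subset\T_mM$ and $D_m\nabla$ is diagonalizable. Choose generators $Y_j$ of $D$ whose values at $m$ are eigenvectors with weights $w_j$.

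Next replace each $Y_j$ by its weight-$w_j$ homogeneous component. In homogeneous coordinates, expand the coefficients of $Y_j$ and project onto weight $w_j$. In the polynomial/formal setting this is clean. For smooth coefficients I would instead average along the flow $h_t$ of $\nabla$ in the form $X_j=\lim$ of a suitable projection, or use the homogeneous Frobenius/normal form results from \cite{Grabowska:2025} for homogeneous submanifolds. Concretely: $D$ is a homogeneous submanifold of $\sT M$ through the zero section. By Proposition~\ref{proposition:homogeneous_coordinates} applied to $(D,\dt\nabla|_D)$, it admits homogeneous coordinates $(x^a,s^j)$ that are linear in the fibre variables. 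The sections $s^j$ dual to these coordinates then give homogeneous generators $X_j$. The projected fields agree with $Y_j$ at $m$, so they remain independent and generate $D$ near $m$.

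The delicate point is to ensure the fibre coordinates on $D$ can be chosen simultaneously homogeneous and fibrewise linear. I would get this by taking the fibrewise-linear parts of homogeneous coordinates on $D$, since $\dt\nabla$ commutes with the Euler field of $\sT M$.
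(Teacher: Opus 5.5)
The paper does not prove this statement; it imports it from \cite{Grabowska:2025}, so there is no in-paper argument to compare against. Your argument is correct in substance. The forward direction via $\dts\nabla(\zi_X)=\zi_{[\nabla,X]}$ is fine, and your parametrized formula $\nabla-\sum_j w_js^j\pa_{s^j}$ is exact, with no correction terms. The ODE argument for $\nabla(m)\neq 0$ works. At a zero of $\nabla$, the route that actually closes is your last one.

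You should delete the ``weight-$w_j$ homogeneous component'' step rather than keep it as an option. It is not available for smooth coefficients: a function flat at $m$ has all its formal homogeneous components zero, and averaging along the non-compact flow of $\nabla$ does not converge. The sentence about projected fields agreeing with $Y_j$ at $m$ belongs to that abandoned route and should go too. In the surviving route, two claims need a line each:
\begin{itemize}
\item The fibrewise-linear part of a $\dt\nabla$-homogeneous function has the same weight. This holds because $\dt\nabla|_D$ is a linear vector field and so preserves fibre-degree.
\item Suppose $(\phi^i)$ is a frame of $D^*$ with $\dt\nabla\phi^i=v_i\phi^i$, and $(X_j)$ is the dual frame. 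Applying $\nabla$ to $\langle\phi^i,X_j\rangle=\delta^i_j$ and using $[\nabla,X_j]\in\Gamma(D)$ gives $[\nabla,X_j]=-v_jX_j$.
\end{itemize}

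There is also a much shorter uniform proof of the converse. It needs no case split, no ODE, no homogeneous coordinates on $D$ and no linear parts. Take homogeneous coordinates with $\nabla=\sum_a w_ax^a\pa_{x^a}$ around $m$; these exist whether or not $\nabla(m)=0$. Choose $A=\{a_1,\dots,a_k\}$, with the right parities, such that the $\dd x^{a_i}$ restrict to a basis of $D_m^*$. Near $m$, $D$ then has unique generators $X_i=\pa_{x^{a_i}}+\sum_{b\notin A}c^b_i\pa_{x^b}$. Since $[\nabla,\pa_{x^a}]=-w_a\pa_{x^a}$,
\[
[\nabla,X_i]=-w_{a_i}\pa_{x^{a_i}}+\sum_{b\notin A}\big(\nabla(c^b_i)-w_bc^b_i\big)\pa_{x^b}.
\]
This lies in $\Gamma(D)$. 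A section of $D$ is determined by its $\pa_{x^{a_j}}$-components, so $[\nabla,X_i]=-w_{a_i}X_i$.

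This is your fibre-linear-coordinates idea with the obvious choice $s^i=\dot x^{a_i}|_D$. These restrictions are already homogeneous and fibrewise linear, which removes your ``delicate point''. What your longer route adds is two remarks. At a zero of $\nabla$, the weights of homogeneous generators are forced: they are minus the eigenvalues of the linearization of $\nabla$ on $D_m$. For $\nabla(m)\neq0$, all generators can be taken of weight $0$.
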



\begin{theorem}[Homogeneous Frobenius Theorem]\label{theorem:Frobenius}
Let $D\subset\sT M$ be a rank-$k$ involutive homogeneous distribution on a homogeneity supermanifold $(M, \nabla)$ of total dimension $n$, and let $m\in|M|$. Then, there is a neighbourhood of $m$ endowed with a system $(x^i)$, $i=1,\dots,n$, of homogeneous local coordinates such that
\begin{description}
\item{(a)} $D$ is locally spanned by $\la\pa_{x^1},\dots,\pa_{x^k}\ran$, if $\n(m)=0$;
\item{(b)} $\n=x^n\pa_{x^n}$, and $D$ is locally spanned by  $\la\pa_{x^1},\dots,\pa_{x^k}\ran$, if $\n(m)\ne 0$ and $\n(m)\notin D_m$.
\item{(c)} $\n=x^n\pa_{x^n}$, and $D$ is locally spanned by $\la\pa_{x^1},\dots,\pa_{x^{k-1}},Y\ran$, where
$$Y=\n+\sum_{j=k}^{n-1}h_j(x^k,\dots,x^{n-1})\pa_{x^j},$$
in the case $\n(m)\ne 0$ and $\n(m)\in D_m$.

\end{description}
\end{theorem}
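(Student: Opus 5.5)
Concretely, the plan is to reduce to the classical super-Frobenius theorem via the structure of $\nabla$, treating the cases $\n(m)=0$ and $\n(m)\neq0$ separately.

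\textbf{Case (a): $\n(m)=0$.}

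First, by Theorem~\ref{homdist}, choose local homogeneous generators $X_1,\dots,X_k$ of $D$. By the ordinary (super) Frobenius theorem, pick coordinates $(y^1,\dots,y^n)$ with $D=\la\pa_{y^1},\dots,\pa_{y^k}\ran$. The algebra $\mathcal{I}$ of local first integrals (functions $f$ with $X f=0$ for all $X\in D$) is then generated by $y^{k+1},\dots,y^n$.

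Homogeneity of $D$ means $[\n,D]\subset D$. Hence $\n$ preserves $\mathcal{I}$, and $\n$ descends to a vector field on the local leaf space $Q$, which carries coordinates $(y^{k+1},\dots,y^n)$. The induced field still vanishes at the image of $m$, is locally linear, and has diagonalizable linear part: $D_m\n$ preserves $D_m$, so it induces a diagonalizable map on $\T_mM/D_m$.

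By Proposition~\ref{proposition:homogeneous_coordinates}, the leaf space therefore admits homogeneous coordinates $x^{k+1},\dots,x^n$, which pulled back are homogeneous first integrals. It remains to complete them by homogeneous functions $x^1,\dots,x^k$ whose restrictions to the leaves are coordinates.

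To do this, take homogeneous coordinates $(z^a)$ on $M$ around $m$; some $k$ of them have differentials independent of $\xd x^{k+1},\dots,\xd x^n$ at $m$, and these serve as $x^1,\dots,x^k$. In these coordinates $D$ is annihilated by $\xd x^{k+1},\dots,\xd x^n$, so $D=\la\pa_{x^1},\dots,\pa_{x^k}\ran$.

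\textbf{Cases (b) and (c): $\n(m)\neq0$.}

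Case (b): $\n(m)\notin D_m$. The leaf-space argument still works: $\n$ projects to a non-vanishing field on $Q$. Straighten it on $Q$ using Proposition~\ref{proposition:homogeneous_coordinates}, with $\n=x^n\pa_{x^n}$ and $x^{k+1},\dots,x^{n-1}$ of weight $0$. Then complete with weight-$0$ functions $x^1,\dots,x^k$. These are obtained by taking arbitrary local coordinates on a transversal hypersurface to $\n$ and transporting them along the flow of $\n$.

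Case (c): $\n(m)\in D_m$. Here $\n$ projects to zero on $Q$, so I would first apply the flow-box argument to $\n$. Choose a hypersurface $S$ transversal to $\n$ and write $M\simeq S\times\R_{>0}$ with $\n=x^n\pa_{x^n}$. Every weight-$0$ function is then a function on $S$.

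The distribution $D'=D\cap\T S$ is not obviously involutive. Instead I would use $D/\la\n\ran$, which is involutive modulo $\n$ and hence gives a rank-$(k-1)$ involutive distribution on $S$, namely its projection along the flow. Apply ordinary Frobenius on $S$ to get coordinates $x^1,\dots,x^{n-1}$ in which this projected distribution is $\la\pa_{x^1},\dots,\pa_{x^{k-1}}\ran$.

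Lifting back, $D$ is spanned by $\pa_{x^1},\dots,\pa_{x^{k-1}}$ together with a vector $Y=\n+\sum_j h_j\pa_{x^j}$ encoding how $D$ tilts relative to the product structure. Subtracting $\pa_{x^j}$-components with $j<k$ lets us assume the sum runs over $j\ge k$. Involutivity $[\pa_{x^i},Y]\in D$ for $i<k$ then forces the $h_j$ to be independent of $x^1,\dots,x^{k-1}$. Finally, homogeneity $[\n,Y]\in D$ together with weight $0$ forces independence of $x^n$.

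\textbf{Main obstacle.} The hard part is case (c): ensuring that $D$ genuinely reduces to a distribution on $S$ and that the residual functions $h_j$ cannot be removed. The statement accepts them, so only this normalization needs checking. A second subtle point is the super setting of case (a), where one must verify that homogeneous first integrals can be chosen to form part of a coordinate system. This should follow from Proposition~\ref{proposition:homogeneous_coordinates} applied on the quotient.
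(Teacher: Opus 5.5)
The paper only quotes this theorem from \cite{Grabowska:2025} and gives no proof of it, so I judge your argument on its own. Case (b) and your final normalization of $Y$ in case (c) are fine. However, case (c) as written has a genuine gap, exactly at the point you flagged as the main obstacle.

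The hypothesis $\nabla(m)\in D_m$ holds only at $m$; at nearby points, $\nabla$ is in general not a section of $D$. So ``$D/\langle\nabla\rangle$'' is not a rank-$(k-1)$ bundle. The projection of $D_p$ along the flow onto $S$ has dimension $k$ wherever $\nabla(p)\notin D_p$, and drops to $k-1$ only where $\nabla\in D$. You therefore get a singular distribution on $S$, and ordinary Frobenius does not apply. Your construction also contradicts itself. If the projection really were $\langle\partial_{x^1},\dots,\partial_{x^{k-1}}\rangle$, then $Y$, which projects to $\sum_{j\ge k}h_j\partial_{x^j}$, would force $h_j\equiv 0$, i.e.\ $\nabla\in D$ near $m$. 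That is false in general. On $\mathbb{R}^2$ near $m=(1,0)$, take $\nabla=x\partial_x$ and $D=\langle x\partial_x+y^2\partial_y\rangle$. This $D$ is involutive and homogeneous, with $\nabla(m)\in D_m$, but its projection to $S=\{x=1\}$ is $\langle y^2\partial_y\rangle$.

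The distribution you discarded is the right one. With $\nabla=x^n\partial_{x^n}$ from the flow box, set $D'=D\cap\ker\mathrm{d}x^n$. It has the following properties:
\begin{itemize}
\item It is involutive, as the intersection of two involutive distributions.
\item It has constant rank $k-1$ near $m$, because $\mathrm{d}x^n(\nabla)=x^n\neq0$ and $\nabla(m)\in D_m$ make $\mathrm{d}x^n|_D$ onto.
\item It is homogeneous, since $[\nabla,D]\subset D$ and $\mathcal{L}_\nabla\mathrm{d}x^n=\mathrm{d}x^n$.
\item It satisfies $\nabla(m)\notin D'_m$.
\end{itemize}
Case (b) applied to $D'$ then gives homogeneous coordinates with $\nabla=x^n\partial_{x^n}$ and $D'=\langle\partial_{x^1},\dots,\partial_{x^{k-1}}\rangle$. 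Hence $D=D'\oplus\langle Y\rangle$ for any section $Y$ of $D$ with $\mathrm{d}x^n(Y)=x^n$, so that $Y-\nabla\in\ker\mathrm{d}x^n$. Your involutivity and homogeneity computations then finish the proof.

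In case (a), the phrase ``the induced field is locally linear'' carries the whole content and is left unproved. A weight vector field on the leaf space is equivalent to having $n-k$ independent homogeneous first integrals, which is (a) itself, and smooth linearizability at a zero is not automatic. You can supply it as follows.
\begin{itemize}
\item $D_m\nabla$ is diagonal in the basis $\partial_{z^a}|_m$ of a homogeneous chart and preserves $D_m$. So a linear change among coordinates of equal weight and parity gives $D_m=\langle\partial_{z^1},\dots,\partial_{z^k}\rangle|_m$.
\item Then $S=\{z^i=z^i(m),\ i\le k\}$ is transversal to $D$ and $\nabla$-invariant, since $z^i(m)=0$ whenever $z^i$ has nonzero weight.
\item Let $x^a$, $a>k$, be the first integrals of $D$ with $x^a|_S=z^a|_S$. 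Since $\nabla$ preserves first integrals and is tangent to $S$, $\nabla x^a-w_ax^a$ is a first integral vanishing on $S$, hence zero.
\end{itemize}
This gives the homogeneous first integrals directly, and the rest of your case (a) goes through.
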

\no Note that, in the cases (a) and (b), the projection into the space of leaves $M/D$ locally is simply `forgetting' the first $k$ coordinates.

\mn In standard differential geometry, it is well known that linearly independent and pairwise commuting vector fields can be written as coordinate vector fields. We will prove a homogeneous version of this theorem.

\begin{lemma}\label{lemma:homog_sol_diff_eq}
  Let $(M, \n)$ be a $(p\vert q)$-dimensional homogeneity supermanifold with homogeneous coordinates $(y^a, \xi^\alpha),\, 1\leq a \leq p\, , 1\leq \alpha \leq q$. Consider the differential equation
  $$\frac{\partial f}{\partial y^1} (y, \xi) = g(y, \xi)\, ,$$
  where $g$ is a homogeneous function of degree $\lambda = (\sigma, w)\in \ZZ_2\times \RR$. Then, in a neighbourhood of every point $x_0 \in \vert M \vert$ at which $\n$ vanishes, this equation has a smooth solution given by
  $$f(y, \xi) = \int_0^{y^1} g\left(s, y^2, \ldots, y^p, \xi^1, \ldots, \xi^q\right)\, \dd s\, ,$$
  which satisfies $\w(f) = w + \w (y^1)$ and $f(x_0) = 0$.
\end{lemma}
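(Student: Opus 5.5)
Near a zero $x_0$ of $\n$, I will work in homogeneous coordinates $(y^a,\xi^\alpha)$ in which $\n=\sum_a w_a y^a\pa_{y^a}+\sum_\alpha v_\alpha \xi^\alpha\pa_{\xi^\alpha}$. These coordinates are not assumed to vanish at $x_0$.

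\textbf{Step 1: vanishing of nonzero-weight coordinates.} By Remark~\ref{remark:zero_homogeneous_functions}, every coordinate of nonzero weight vanishes at $x_0$. For a coordinate of weight zero, I replace $y^a$ by $y^a-y^a(x_0)$; this keeps it homogeneous and keeps the expression of $\n$. So I may assume every coordinate vanishes at $x_0$. In particular $y^1(x_0)=0$.

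\textbf{Step 2: the domain and the solution.} I choose a product neighbourhood, an interval in $y^1$ containing $0$ times a neighbourhood in the remaining even coordinates. The integral is then defined for all points in it. Expand $g=\sum_I g_I(y)\xi^I$ in the odd variables, with $g_I$ smooth functions of the even coordinates. The integral is understood componentwise, and differentiation under the integral sign (fundamental theorem of calculus) gives $\pa_{y^1}f=g$. Since the integral runs from $0$ to $y^1$, and $y^1=0$ at $x_0$, we get $f(x_0)=0$. Parity is preserved, because $\pa_{y^1}$ and integration in the even variable $y^1$ are even operations.

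\textbf{Step 3: homogeneity.} This is the only real step. Set $w_1=\w(y^1)$. I use the commutator $[\n,\pa_{y^1}]=-w_1\pa_{y^1}$. Then
$$\pa_{y^1}\big(\n f-(w+w_1)f\big)=\n\pa_{y^1}f+w_1\pa_{y^1}f-(w+w_1)g=\n g-wg=0.$$
Hence $h:=\n f-(w+w_1)f$ does not depend on $y^1$.

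Restricting to the slice $y^1=0$ gives $f=0$ there. The operator $\n$ is tangent to $\{y^1=0\}$, because its $y^1$-component is $w_1y^1$, which vanishes on the slice. So $\n f$ also vanishes on the slice, and therefore $h|_{y^1=0}=0$. Since $h$ is independent of $y^1$, this gives $h\equiv0$, so $\w(f)=w+w_1$.

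An alternative check is the substitution $s=y^1t$ in the integral, which turns the formula into a scaling computation. I would only mention it.

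The main subtlety I expect is in Step 1: making sure that $y^1$, and thus the lower limit $0$, really corresponds to a point of the domain. This is exactly where the hypothesis that $\n$ vanishes at $x_0$ is used. It guarantees that nonzero-weight coordinates vanish there, and that shifted zero-weight coordinates remain homogeneous. Away from zeros of $\n$, the slice $y^1=0$ might not meet the chart.
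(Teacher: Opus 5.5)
Your proof is correct. The paper states this lemma without proof, so the comparison below is with the direct verification one would expect rather than with a written argument.

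\textbf{Your argument.} Each step checks. The identity $[\n,\pa_{y^1}]=-w_1\pa_{y^1}$, with $w_1=\w(y^1)$, gives $\pa_{y^1}\big(\n f-(w+w_1)f\big)=\n g-wg=0$. The $y^1$-component of $\n$ is $w_1y^1$, so the slice $\{y^1=0\}$ is $\n$-invariant. Hence $f|_{y^1=0}=0$ forces $(\n f)|_{y^1=0}=0$, and therefore $h\equiv 0$ on your product domain.

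\textbf{The direct route.} Write $\n=w_1y^1\pa_{y^1}+\n'$ and differentiate under the integral sign to get $\n' f=\int_0^{y^1}(\n' g)(s,y^2,\dots,\xi)\,\dd s$. Use $\n g=wg$ in the form $(\n' g)(s,\dots)=w\,g(s,\dots)-w_1 s\,\pa_s g(s,\dots)$. Integrating by parts then gives
\[
\n f=w_1y^1g+wf-w_1\big(y^1g-f\big)=(w+w_1)f .
\]
Your substitution $s=y^1t$ is the same computation in scaling form.

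\textbf{Comparison.} You replace this explicit manipulation of the integrand with a uniqueness argument. That proves slightly more: any solution of $\pa_{y^1}f=g$ vanishing on $\{y^1=0\}$ is homogeneous of weight $w+w_1$, whatever the integral formula. It also shows exactly where the hypothesis $\n(x_0)=0$ enters: only to ensure that $x_0$ lies on the slice $\{y^1=0\}$, which is always $\n$-invariant. The direct computation instead verifies the identity in one line, but it is tied to the explicit formula.

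\textbf{The normalization in Step 1.} You make explicit something the statement leaves implicit. For $f(x_0)=0$, and even for the integral from $0$ to make sense near $x_0$, one needs $y^1(x_0)=0$. This is automatic when $\w(y^1)\neq 0$, by Remark~\ref{remark:zero_homogeneous_functions}. Otherwise it is achieved by translating $y^1$, which in the original coordinate turns $f$ into $\int_{y^1(x_0)}^{y^1}g\,\dd s$. This matches how the paper actually uses the lemma: in the proof of Proposition~\ref{proposition:straightening} it integrates from $\bar x^1(x_0)$.
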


\begin{proposition}[Homogeneous straightening coordinates]\label{proposition:straightening}
  Let $(M, \n)$ be a homogeneity supermanifold of total dimension $n$. Let $X_1, \ldots, X_k$ (with $1\leq k\leq n$) be linearly independent homogeneous vector fields which pairwise supercommute, i.e.,
  $$[X_i, X_j] = 0\, , \quad 1\leq i,j\leq k\, .$$
  Assume that, at a point $x_0\in |M|$, either $\n(x_0)=0$ or $\n(x_0)$ is linearly independent of $X_1(x_0), \ldots, X_k(x_0)$.
  Then, there exists a system of homogeneous coordinates $(x^a)$ around $x_0$ such that
  $$X_i = \pa_{x^i}\, ,\quad 1\leq i\leq k\, .$$
\end{proposition}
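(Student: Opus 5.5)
We argue by induction on $k$. The case $k=0$ is the existence of homogeneity charts, which is the definition of a weight vector field. The engine is the Homogeneous Frobenius Theorem~\ref{theorem:Frobenius} together with Lemma~\ref{lemma:homog_sol_diff_eq}. First we reduce to homogeneous coordinate changes. The distribution $D=\la X_1,\dots,X_k\ran$ is involutive because the fields commute, and it is homogeneous by Theorem~\ref{homdist}, since it is generated by homogeneous fields. Our hypothesis on $x_0$ says we are in case (a) or (b) of Theorem~\ref{theorem:Frobenius}. So there are homogeneous coordinates $(y^1,\dots,y^k,z^1,\dots,z^{n-k})$ in which $D=\la\pa_{y^1},\dots,\pa_{y^k}\ran$, and in case (b) also $\n=z^{n-k}\pa_{z^{n-k}}$. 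Hence
$$X_i=\sum_j A_i^j(y,z)\,\pa_{y^j},$$
with $(A_i^j)$ invertible near $x_0$. Each $A_i^j$ is homogeneous of weight $\w(X_i)+\w(y^j)$.

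Next we look for new coordinates of the form $x^j=f^j(y,z)$ for $j\le k$, keeping the $z$'s unchanged. We need $X_i(f^j)=\delta_i^j$, and we need $f^j$ homogeneous of weight $-\w(X_j)$. Because the $X_i$ commute and are independent, the dual $1$-forms $\theta^j$ on $D$, defined by $\theta^j(X_i)=\delta_i^j$, are closed along the leaves. Concretely, $\theta^j=\sum_l B^j_l\,\xd y^l$ with $B=A^{-1}$. Its leafwise exterior derivative vanishes by the Cartan formula, since $\theta^j([X_a,X_b])=0$ and $X_a(\theta^j(X_b))=0$. Moreover $\theta^j$ is homogeneous of weight $-\w(X_j)$. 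We then need a homogeneous leafwise Poincaré lemma with parameters $z$. We obtain it by integrating successively in $y^1,\dots,y^k$ using Lemma~\ref{lemma:homog_sol_diff_eq}, i.e.\ the standard homotopy-operator proof done one variable at a time. Lemma~\ref{lemma:homog_sol_diff_eq} gives a solution of $\pa_{y^l}f=g$ of the correct weight $\w(g)+\w(y^l)$, which equals $-\w(X_j)$ here. Then $(f^1,\dots,f^k,z)$ is a homogeneous chart, since $\pa f^j/\pa y^l=B^j_l$ is invertible. In it $X_i=\pa_{x^i}$, because $X_i(z)=0$.

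The main obstacle is that Lemma~\ref{lemma:homog_sol_diff_eq} is stated only near points where $\n$ vanishes. In case (b) we handle it as follows. Since $\n=z^{n-k}\pa_{z^{n-k}}$ and the $y$-coordinates are transverse to $\n$, every homogeneous function has the form $(z^{n-k})^{w}\,h(y,z')$, where $h$ is independent of $z^{n-k}$ and $z'=(z^1,\dots,z^{n-k-1})$. Here $z^{n-k}>0$ near $x_0$, because $z^{n-k}(x_0)=1$. Integrating in $y^l$ from a fixed base value commutes with multiplication by this factor, so the antiderivative is automatically homogeneous of the same weight as the integrand. Note that in this case the weight $\w(y^l)$ is $0$.

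In case (a) we argue differently. Weight considerations via Remark~\ref{remark:zero_homogeneous_functions} show that $A_i^j(x_0)\neq0$ forces $\w(X_i)=-\w(y^j)$. After a homogeneous linear change of the $y$'s we may arrange $A(x_0)=\mathrm{id}$ within equal-weight blocks. Integrating from $0$ as in Lemma~\ref{lemma:homog_sol_diff_eq} then produces $f^j$ of weight $\w(y^j)=-\w(X_j)$. We must check that the integrability conditions of the successive one-variable integrations survive homogeneously, and this is where the care goes.

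For odd $X_i$ the same scheme applies. Odd variables are "integrated" by the Berezin-type formula $f=\xi\, g|_{\xi=0}+\dots$, which is polynomial and manifestly homogeneous. The required integrability $[X_i,X_i]=0$ is part of the hypothesis.
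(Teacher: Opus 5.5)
Your argument is correct, but it is organised differently from the paper's. The paper genuinely inducts on $k$. It straightens $X_1$ alone by applying Theorem~\ref{theorem:Frobenius} to the line field $\la X_1\ran$ and taking the quadrature $x^1=\int\xd s/f$. In the inductive step, $X_1,\dots,X_{k-1}$ are already $\pa_{\tilde x^1},\dots,\pa_{\tilde x^{k-1}}$, and commutativity makes the coefficients of $X_k$ independent of $\tilde x^1,\dots,\tilde x^{k-1}$. The transverse part $Y$ is then straightened in the remaining coordinates by the rank-one case, and the components along $\pa_{\tilde x^i}$ are removed by subtracting $\int A_i\,\xd s$.

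You instead apply homogeneous Frobenius once to all of $D$, encode $[X_i,X_j]=0$ as leafwise closedness of the dual coframe $\theta^j$, and reduce everything to a single homogeneous leafwise Poincar\'e lemma with parameters $z$. The induction you announce is therefore never used. Your version makes the role of commutativity transparent and addresses odd fields explicitly. It also uses the hypothesis on $\n(x_0)$ exactly once, whereas the paper tacitly needs that hypothesis to persist when it re-applies the rank-one case to $Y$ in the coordinates $(\tilde x^k,\dots,\tilde x^n)$. The paper's version is more elementary, since each step is a one-variable quadrature.

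The step you flag in case (a) as ``where the care goes'' is routine. Your normalisation $A(x_0)=\mathrm{id}$ is also unnecessary, because the weight of $f^j$ is already forced by the weights of the $B^j_l$. Take a leafwise closed $\theta=\sum_l B_l\,\xd y^l$ of weight $c$. Shift the weight-zero $y$'s so that $y(x_0)=0$ (the others vanish at $x_0$ automatically), and put
\[
f=\sum_{i=1}^k\int_0^{y^i}B_i(0,\dots,0,s,y^{i+1},\dots,y^k,z)\,\xd s .
\]
When $y^i$ is odd, read the integral as $y^i$ times the integrand, which closedness makes independent of $y^i$ at that stage.

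This is the usual one-variable-at-a-time primitive, so $\xd_y f=\theta$. Each restricted coefficient is homogeneous because $\n$ is tangent to $\{y^1=\dots=y^{i-1}=0\}$. Each summand has weight $\w(B_i)+\w(y^i)=c$ by the computation in Lemma~\ref{lemma:homog_sol_diff_eq}. That computation needs the lower limit $0$ only when $\w(y^i)\neq 0$; it does not need $\n(x_0)=0$. Since all $y^i$ have weight $0$ in case (b), the same formula covers that case too. Your separate factorisation through $(z^{n-k})^{w}$ is correct but can be dropped.
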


\begin{proof}
We shall prove it by induction on $k$. For $k=1$, we can consider the homogeneous involutive distribution $D = \gen{X_1}$. By Theorem~\ref{theorem:Frobenius}, around any $x_0\in |M|$, there exists a system of homogeneous coordinates $(\bar{x}^a)$ such that $D=\gen{\pa_{\bar{x}^1}}$. Therefore, $X_1 = f \pa_{\bar{x}^1}$ for some nowhere-vanishing even function $f$. This function is necessarily homogeneous, because both $\pa_{\bar{x}^1}$ and $X_1$ are homogeneous vector fields. Additionally, in the case $\n(x_0)=0$, $f$ has weight $0$ (see Remark~\ref{remark:zero_homogeneous_functions}). Note that $1/f$ is a homogeneous function with weight $\w(1/f) = -\w(f)$. Consider the homogeneous local function $x^1$ given by
$$x^1 = \int_{\bar{x}^1(x_0)}^{\bar{x}^1} \frac{\dd s}{f(s, \bar{x}^2, \ldots, \bar{x}^n)}\, ,$$
so that $\liedv{X_1} x^1 = 1$. It is functionally independent of $\bar{x}^2, \ldots, \bar{x}^n$. Thus, $(x^1, x^2 = \bar{x}^2, \ldots, x^n = \bar{x}^n)$ is a system of homogeneous coordinates in which $X = \pa_{x^1}$.

\mn Now consider $k$ linearly independent homogeneous vector fields $X_1, \ldots, X_k$ which pairwise supercommute. Applying the induction assumption, there exists a system of homogeneous coordinates $(\tilde{x}^a)$ such that
$$X_i = \pa_{\tilde{x}^i}\, , \quad 1\leq i \leq k-1\, .$$
Taking into account that $X_k$ is homogeneous and supercommutes with $X_1,\ldots, X_{k-1}$, we can see that, in these coordinates, it reads
$$X_k = \sum_{i=1}^{k-1} A_i(\tilde{x}^k, \ldots, \tilde{x}^n) \pa_{\tilde{x}^i} + \sum_{\mu=1}^{n} B_\mu(\tilde{x}^k, \ldots, \tilde{x}^n) \pa_{\tilde{x}^\mu}\, ,$$
where $A_i$ and $B_\mu$ are homogeneous functions that depend only on the coordinates $(\tilde{x}^k, \ldots, \tilde{x}^n)$. Moreover, at least one of the functions $B_\mu$ is nowhere-vanishing, because $X_k$ is linearly independent of the other vector fields. Therefore,
$$Y \coloneqq \sum_{\mu=1}^{n} B_\mu(\tilde{x}^k, \ldots, \tilde{x}^n) \pa_{\tilde{x}^\mu}$$
is a nowhere-vanishing homogeneous local vector field, and it depends only on the coordinates $(\tilde{x}^k, \ldots, \tilde{x}^n)$. As in the case $k=1$, we can make a change of homogeneous coordinates
$(\tilde{x}^k, \ldots, \tilde{x}^n)\mapsto (x^k, \ldots, x^n)$
so that $Y= \pa_{x^k}$. Then,
$$X_k = \sum_{i=1}^{k-1} A_i(\tilde{x}^k, \ldots, \tilde{x}^n) \pa_{\tilde{x}^i} +  \pa_{x^k}\, ,$$
in the system of homogeneous coordinates $(\bar{x}^1, \ldots, \bar{x}^{k-1}, x^k, \ldots, x^n)$. Finally, we can define the homogeneous coordinates
$$x^i = \bar{x}^i - \int_{x^k(x_0)}^{x^k} A_i \left(s,\tilde{x}^{k+1}, \ldots, \tilde{x}^n\right)\, \dd s\, , \quad 1\leq i \leq k-1\, ,$$
which satisfy
$$\liedv{X_j} x^i = \delta_j^{\, i}\, , \quad \liedv{X_k} x^i = 0\, , \quad 1\leq i, j\leq k-1\, .$$

\end{proof}

\section{Darboux theorems for homogeneous forms}\label{sec:Darboux}
On homogeneity supermanifolds, the homogeneous differential forms are of particular interest. We have the following homogeneous version of Poincar\'e Lemma (see \cite{Grabowska:2025}), that immediately leads to the corresponding homogeneous de Rham cohomology (for a linear version see \cite{Cabrera:2017}).
\begin{lemma}[Homogeneous Poincar\'e Lemma]\label{lemma:Poincare}
  Let $(M,\nabla)$ be a homogeneity supermanifold of total dimension $k$, let $m\in|M|$, and let
  $\nabla=\sum_{i=1}^kw_i\cdot x^i\,\pa_{x^i}$ in a neighbourhood of $m$. In this neighbourhood, take a closed homogeneous $n$-form $\omega$, $n>0$, of degree $\lambda=(\sigma,w) \in \ZZ_2 \times \RR$.
  \begin{enumerate}
    \item
    If $\nabla(m)=0$, then we can find a homogeneous $(n-1)$-form $\alpha$ of degree $\lambda$ such that $\omega=\dd\alpha$. Moreover, we can choose $\alpha$ such that $\alpha(m)=0$.
    \item
    If $\nabla(m)\neq 0$, then there is such a form $\alpha$ for $n>1$. For $n=1$, we can find a homogeneous function $f$ such that $\omega=\dd f$, except for the case $w=0$. However, the function $f$ does not vanish at $m$ (except $\omega=0$, of course).
    \item If $\nabla(m)\neq 0$ and a homogeneous 1-form $\omega$ has weight 0, then there are local homogeneous coordinates $(x,y^i)$ such that $\n = x \pa_x,\, x(m)=1$ and $y^i(m)=0$ (see Proposition~\ref{proposition:homogeneous_coordinates}). In such coordinates,
    $$\omega(x,y)=c\, \frac{\dd x}{x}+\dd g(y),$$
    for a constant $c\in\RR$, and $g=g(y)$ being a function depending on coordinates $(y^i)$ only. In other words,
    $$\omega=\dd f,\quad f(x,y)=c\log(x)+g(y)+c_1 \, ,$$
    for a constant $c_1\in \RR$. The function $f$ is not homogeneous if $c\neq 0$.
  \end{enumerate}
\end{lemma}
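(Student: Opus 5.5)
The plan is to exploit the Cartan formula together with the fact that $\nabla$ is even and has weight $0$, so that $\contr{\nabla}$ and $\xd$ preserve degrees. If $\omega$ is closed and homogeneous of degree $\lambda=(\sigma,w)$, then
$$w\,\omega=\liedv{\nabla}\omega=\xd\contr{\nabla}\omega+\contr{\nabla}\xd\omega=\xd\contr{\nabla}\omega .$$
Hence, whenever $w\neq 0$, the form $\alpha=\frac1w\contr{\nabla}\omega$ is a homogeneous primitive of degree $\lambda$. This covers every case with nonzero weight at once. In case 1 it also gives $\alpha(m)=0$, since $\nabla(m)=0$. In case 2 with $n=1$ it gives $f=\frac1w\,\omega(\nabla)$, which need not vanish at $m$.

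For $\nabla(m)=0$ and $w=0$, I would use the standard radial homotopy operator, but in coordinates adapted to $\nabla$. By Remark~\ref{remark:zero_homogeneous_functions}, every coordinate of nonzero weight already vanishes at $m$. Coordinates of weight $0$ can be shifted by constants so that they also vanish at $m$, and they stay homogeneous. In these coordinates, take a star-shaped neighbourhood and the homotopy $h_t(x)=(tx^i)$. Let
$$K\omega=\int_0^1 h_t^*\big(\contr{E}\omega\big)\,\frac{\xd t}{t}, \qquad E=\sum_i x^i\pa_{x^i},$$
where $\contr{E}$ acts before pulling back. This $K$ satisfies $\xd K+K\xd=\mathrm{id}$ on forms of positive degree, and the formal superversion works coordinatewise. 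The key observation is that $\nabla=\sum w_ix^i\pa_{x^i}$ commutes with $E$ and with every $h_t$, because both are diagonal in the same coordinates. Hence $K$ commutes with $\liedv{\nabla}$ and preserves weight and parity. So $\alpha=K\omega$ is homogeneous of degree $\lambda$ and vanishes at $m$, since $E(m)=0$. This argument in fact handles all $w$ uniformly. I expect the main technical point to be checking carefully that the homotopy formula and the commutation $[\liedv{\nabla},K]=0$ hold in the super setting; I would verify this on monomials $x^I\,\xd x^J$, where both sides are explicit.

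For $\nabla(m)\neq0$ and $w=0$, I would pass to the coordinates of Proposition~\ref{proposition:homogeneous_coordinates} with $\nabla=x\pa_x$, $x(m)=1$ and $y^i$ of weight $0$. Homogeneity of weight $0$ forces the decomposition
$$\omega=\frac{\xd x}{x}\wedge\beta(y)+\gamma(y),$$
where $\beta$ and $\gamma$ are forms involving only the $y$'s. Closedness then gives $\xd\beta=0$ and $\xd\gamma=0$. For $n>1$, the ordinary Poincaré lemma in the $y$-variables gives $\beta=\xd\beta'$ and $\gamma=\xd\gamma'$. Then $\alpha=\gamma'-\frac{\xd x}{x}\wedge\beta'$ is a weight-$0$ primitive, since
$$\xd\Big(\!-\frac{\xd x}{x}\wedge\beta'\Big)=\frac{\xd x}{x}\wedge\beta .$$
For $n=1$, $\beta$ is a closed function of $y$ only, hence a constant $c$, and $\gamma=\xd g(y)$. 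This gives item 3, with $f=c\log x+g(y)+c_1$. Finally, $\liedv{\nabla}f=c$, so $f$ is not homogeneous when $c\neq0$.
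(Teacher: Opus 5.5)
The paper gives no proof of this lemma here; it quotes it from \cite{Grabowska:2025}, so there is no in-text argument to compare against. Judged on its own, your proof is correct and covers all three items.

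\begin{itemize}
\item The Cartan-formula primitive $\frac1w\contr{\nabla}\omega$ settles every case with $w\neq0$.
\item At a zero of $\nabla$, the radial homotopy in homogeneous coordinates centred at $m$ is $\nabla$-equivariant for exactly the reason you give. For $t>0$, $h_t$ maps $\nabla$ to itself, and $[\nabla,E]=0$, so $h_t^*$ and $\contr{E}$ commute with $\liedv{\nabla}$. This intrinsic argument already suffices. No monomial-by-monomial check is needed, and such a check would anyway require a density argument for smooth coefficients in the even variables.
\item Away from zeros of $\nabla$, the splitting $\omega=\frac{\xd x}{x}\wedge\beta(y)+\gamma(y)$ correctly reduces everything to the ordinary Poincar\'e lemma in the $y$-variables.
\end{itemize}

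Three small remarks.

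First, when you take primitives $\beta'$, $\gamma'$ in the $y$-variables, keep only their parity-$\sigma$ components (or note that the radial homotopy operator is even). Then $\alpha$ has degree $\lambda$, not merely weight $w$.

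Second, the clause that $f$ ``does not vanish at $m$'' is false if read literally. Near $m=(1,0)$ with $\nabla=x\pa_x+z\pa_z$ and $\omega=\xd z$, the only homogeneous primitive is $f=z$, which vanishes at $m$. Your reading ``need not vanish'' is the right one. The clean justification is uniqueness: a weight-$w$ primitive satisfies $wf=\nabla(f)=\omega(\nabla)$. So for $w\neq0$ it cannot be renormalised as in item 1; for instance, $\nabla=x\pa_x$ and $\omega=\xd x$ give $f=x$ with $f(m)=1$.

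Third, item 3 also follows without the splitting, by the argument the paper itself uses in the proof of Theorem~\ref{theorem:Darboux_one_forms}. Since $\xd\big(\omega(\nabla)\big)=\liedv{\nabla}\omega=0$, the function $\omega(\nabla)=c$ is constant. For any local primitive $f$ one then has $x\pa_x\big(f-c\log x\big)=0$, so $f-c\log x=g(y)$. The same identity $\nabla(f)=c$ shows that $f$ cannot be homogeneous when $c\neq0$.
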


\mn Let now $M$ be a purely even manifold, and let $\za$ be a nowhere-vanishing 1-form. Darboux defined in \cite{Darboux:1882} certain classes of such forms, called \emph{Pfaffian forms}, as they define \emph{Pfaffian equations} $\za(X)=0$ (see also \cite[Chapter V]{Libermann:1987}).
\begin{definition}
A Pfaffian form $\za$ is said to be at $x\in M$
\begin{enumerate}
\item \emph{of class $(2s+1)$} if
\be\label{oddc} \za\we\big(\xd\za\big)^{s}(x)\ne 0,\quad \big(\xd\za\big)^{s+1}(x)=0;
\ee
\item \emph{of class $(2s+2)$} if
\be\label{evenc} \za\we\big(\xd\za\big)^{s}(x)\ne 0,\quad \big(\xd\za\big)^{s+1}(x)\ne 0,\quad \za\we\big(\xd\za\big)^{s+1}(x)= 0.
\ee
\end{enumerate}
\end{definition}
\no This characterization served Darboux to obtain a local description of Pfaffian forms of constant class.
\begin{theorem}[Darboux] Let $\za$ be a Pfaffian form on a manifold of dimension $n$. Then, around any point $x\in M$ there are local coordinates $(x^0,\dots,x^n)$ such that
\begin{enumerate}
\item
\be\label{oddc1}\za=\xd x^0-\sum_{k=1}^sx^{s+k}\xd x^k\ee
if $\za$ is of constant class $(2s+1)$;
\item
\be\label{evenc1}\za=x^{2s+1}\xd x^0-\sum_{k=1}^sx^{s+k}\xd x^k\ee
if $\za$ is of constant class $(2s+2)$.
\end{enumerate}
\end{theorem}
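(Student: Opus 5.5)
We prove the classical Darboux theorem (non-homogeneous, purely even) by the standard induction on the class. The inductive step runs through the characteristic distribution $\chi(\za)=\ker\za\cap\ker\xd\za$ and Frobenius. The key observation is that constant class makes $\chi(\za)$ a regular distribution. Indeed, $\za\we(\xd\za)^s\neq0$ together with $(\xd\za)^{s+1}=0$ (odd case) forces the rank of $\xd\za$ restricted to $\ker\za$ to be exactly $2s$. Hence $\chi(\za)$ has constant corank $2s+1$ (odd case), respectively $2s+2$ (even case). Moreover $\chi(\za)$ is involutive: for $X,Y\in\chi(\za)$ we have $\za([X,Y])=-\xd\za(X,Y)+X\za(Y)-Y\za(X)=0$. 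Also $\liedv{X}\za=\contr{X}\xd\za+\xd\contr{X}\za=0$, so $\liedv{X}\xd\za=0$, and Cartan's formula then gives $\contr{[X,Y]}\xd\za=0$. By the Frobenius theorem we choose coordinates in which $\chi(\za)=\gen{\pa_{y^1},\dots,\pa_{y^{n-c}}}$, where $c$ is the class. Since $\contr{\pa_{y^j}}\za=0$ and $\liedv{\pa_{y^j}}\za=0$, the form $\za$ is basic: it is the pullback of a form $\bar\za$ on the local leaf space $N$ of dimension $c$. Thus everything reduces to $\dim N=c$, where $\bar\za$ is contact (odd class) or $\bar\za\we(\xd\bar\za)^{s}\neq0$ with $\xd\bar\za$ symplectic on $N$ (even class).

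For the odd case on $N$, with $\dim N=2s+1$ and $\bar\za$ contact, let $R$ be the Reeb field, so that $\contr{R}\bar\za=1$ and $\contr{R}\xd\bar\za=0$. Straighten $R=\pa_{x^0}$. Then $\liedv{R}\bar\za=0$, so $\bar\za-\xd x^0$ is basic for the projection along $R$ and descends to a form $\beta$ on a $2s$-manifold with $\xd\beta$ symplectic. The classical symplectic Darboux theorem (proved by Moser's trick or by induction) gives $\xd\beta=\sum_k\xd x^k\we\xd z^k$. The homogeneous Poincaré lemma, here only its ordinary version, then gives $\beta=-\sum_k z^k\xd x^k+\xd h$. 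Replacing $x^0$ by $x^0+h$ yields \eqref{oddc1}.

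For the even case on $N$, with $\dim N=2s+2$ and $\xd\bar\za$ symplectic, let $Z$ be the Liouville field defined by $\contr{Z}\xd\bar\za=\bar\za$. It is nonvanishing because $\bar\za$ is Pfaffian, and it satisfies $\liedv{Z}\bar\za=\bar\za$. Pick a local hypersurface $H$ transversal to $Z$. The restriction $\za_H$ has class $2s+1$, and its pullback $\za_H$ is nowhere zero: since $\za(Z)=0$ and $\za\neq0$, $\za$ does not vanish on $TH$. Apply the odd case on $H$ to write $\za_H=\xd u-\sum z^k\xd x^k$. Then flow along $Z$ with parameter $t$, extending the coordinates to be invariant under $Z$. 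Homogeneity $\liedv{Z}\bar\za=\bar\za$ gives $\bar\za=e^{t}\za_H$. Setting $x^{2s+1}=e^t$ and renaming $e^tz^k$ as the new $z^k$ produces \eqref{evenc1}.

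The main obstacle is the algebraic step: showing that the pointwise class conditions \eqref{oddc} and \eqref{evenc} force constant rank of $\chi(\za)$ with exactly the stated corank. We handle it by linear algebra at each point. Choose a complement to $\ker\za$ and use the normal form of the skew form $\xd\za|_{\ker\za}$. Then $\za\we(\xd\za)^s\neq0$ holds iff that rank is at least $2s$. The conditions on $(\xd\za)^{s+1}$ and $\za\we(\xd\za)^{s+1}$ then distinguish whether the vector $\contr{}$-dual to $\za$ lies in the image of $\xd\za$, which is what separates class $2s+1$ from class $2s+2$.
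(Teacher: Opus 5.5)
Your argument is correct. For the even class it takes a genuinely different route from the paper.

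The paper does not prove this classical statement directly. It cites Godbillon for the agreement of Darboux's class with $\corank\chi(\za)$, and recovers the theorem as the case $\nabla\equiv0$ of its Darboux theorem for homogeneous one-forms. Your reduction along $\chi(\za)$ is the same as the paper's, and your linear algebra replaces the citation. Your odd case uses the same ingredients in a different order. The paper applies presymplectic Darboux to $\xd\za$ and then Poincar\'e to $\za-\sum p_i\xd q^i$; you straighten the Reeb field first and descend $\bar\za-\xd x^0$.

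For class $2s+2$ the routes really differ. The paper passes to the contactization $\xd t+\za$ on $M\times\RR$, applies the contact case, and projects along the Reeb field $\pa_t$. You instead flow along the Liouville field $Z$ starting from a transversal hypersurface.

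The contactization is what transfers to weights (give $t$ weight $w$) and to odd forms (use $\RR^{0|1}$), which the paper needs. Your route uses the nonvanishing of $\za$ exactly where it is indispensable, namely to get $Z\neq0$. This is the delicate point of the contactization. The relation $\contr{\pa_z}\xd t=1$ only gives $t=z+g(q,p)$, hence $\za=\sum p_i\xd q^i-\xd g$, and removing $\xd g$ must use $\za\neq0$ somewhere. For example, $\tfrac12(p\,\xd q-q\,\xd p)$ has symplectic differential but an isolated zero, so it is never of the form $P\,\xd Q$.

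The one step you should write out is that $\za_H$ is contact on $H$. This follows from
$\contr{Z}(\xd\bar\za)^{s+1}=(s+1)\,\bar\za\we(\xd\bar\za)^s$, because contracting a volume form with a nonvanishing field $Z$ gives a form that restricts to a volume form on any hypersurface transversal to $Z$.
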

\no Of course, the Darboux's class definitions do not make much sense on supermanifolds, where the top forms generally do not exist, $\big(\xd\xi\big)^s\ne 0$ for any $s=1,2,\dots$, if $\xi$ is an odd variable.

\mn Fortunately, there is another definition of class, observed already by Godbillon \cite{Godbillon:1969} (see also \cite{Grabowska:2024}), that can be applied also to supermanifolds. We will use the standard Cartan calculus for differential forms on a supermanifold associated with the \emph{Deligne's sign convention} (see, e.g., \cite[Section 8]{Grabowska:2025} and \cite[Appendix to \S 1]{Deligne:1999}).

\begin{definition}
  Let $\alpha$ be a $k$-form on a supermanifold $M$ of total dimension $n$. The subset
  $$\chi(\alpha) = \ker(\alpha) \cap \ker (\dd \alpha) \subseteq \T M$$
  is called the \emph{characteristic set} of $\alpha$. Here, $\ker(\zb)$ is the left $\Cinfty$-module of vector fields $X$ such that $i_X\zb=0$.

 \mn  If $\chi(\alpha)$ is a distribution of rank $k$, i.e., it is locally freely generated by $k$ nonvanishing vector fields, it is called the \emph{characteristic distribution} of $\alpha$. In that case, we say that $\alpha$ is \emph{regular}, and the corank $(n-k)$ of $\chi(\alpha)$ as a sub-bundle of $\T M$ is called the \emph{class of $\alpha$}:
  $$\class(\alpha) \coloneqq \corank\big(\chi(\alpha)\big)\, .$$
  A regular form is called \emph{non-degenerate} if its characteristic distribution is trivial, that is, if its class is the total dimension of $M$. Otherwise, it is called \emph{degenerate}.
\end{definition}

\begin{proposition}\label{proposition:charact_dist_invariant}
  If $\alpha$ is a regular $k$-form on a supermanifold, then its characteristic distribution $\chi(\alpha)$ is involutive and $\alpha$ is $\chi(\alpha)$-invariant.
\end{proposition}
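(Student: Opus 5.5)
The proof rests on the Cartan calculus on the supermanifold with Deligne's sign convention, in particular the identities
$$\contr{[X,Y]}=[\liedv{X},\contr{Y}],\qquad \liedv{X}=[\contr{X},\xd]=\contr{X}\xd+(-1)^{|X|}\xd\,\contr{X},$$
where the bracket on the left is the graded commutator. Everything is local, so I work with local sections of $\chi(\alpha)$. Since $\chi(\alpha)$ is a distribution, it is locally freely generated, and it is closed under left multiplication by functions. Both properties are then checked on homogeneous-parity generators and extended by $\Cinfty$-linearity.

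The first step is invariance. Let $X\in\chi(\alpha)$, so $\contr{X}\alpha=0$ and $\contr{X}\xd\alpha=0$. Cartan's formula gives
$$\liedv{X}\alpha=\contr{X}\xd\alpha\pm\xd\contr{X}\alpha=0.$$
This shows that $\alpha$ is $\chi(\alpha)$-invariant. It also gives $\liedv{X}\xd\alpha=\pm\xd\liedv{X}\alpha=0$, because $\liedv{X}$ graded-commutes with $\xd$.

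The second step is involutivity. Take $X,Y\in\chi(\alpha)$. Then
$$\contr{[X,Y]}\alpha=\liedv{X}\contr{Y}\alpha\mp\contr{Y}\liedv{X}\alpha=0-0=0,$$
using $\contr{Y}\alpha=0$ and the first step. The same computation with $\xd\alpha$ in place of $\alpha$ gives $\contr{[X,Y]}\xd\alpha=0$, since $\contr{Y}\xd\alpha=0$ and $\liedv{X}\xd\alpha=0$. Hence $[X,Y]\in\chi(\alpha)$.

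The main point needing care is that the kernel was defined as a left module, while brackets of $fX$ with $gY$ produce terms such as $fX(g)Y$. These terms lie in $\chi(\alpha)$ because it is a left $\Cinfty$-submodule. So it suffices to check involutivity on local free generators, which is exactly the computation above.

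Regularity is used to ensure that $\chi(\alpha)$ is a genuine distribution, that is, a subbundle. Involutivity in the Frobenius sense is therefore meaningful, and the homogeneous Frobenius theorem (Theorem~\ref{theorem:Frobenius}) can be applied later.

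The step I expect to demand the most attention is tracking the signs for odd vector fields. The signs do not matter, however, because every term vanishes separately.
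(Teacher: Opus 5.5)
Your proof is correct and follows essentially the paper's argument: both rest on the Cartan identities $\contr{[X,Y]}=\pm[\contr{X},\liedv{Y}]$ and $\liedv{X}=\contr{X}\xd\pm\xd\contr{X}$, with every term vanishing separately so the signs are irrelevant. The only difference is order: you prove invariance $\liedv{X}\alpha=0$ (and $\liedv{X}\xd\alpha=0$) first and feed it into the commutator, whereas the paper expands $\liedv{Y}$ inside $[\contr{X},\liedv{Y}]\alpha$ to reach $\contr{X}\contr{Y}\xd\alpha=0$ directly.
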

\begin{proof}
  For any pair of sections $X$ and $Y$ of $\chi(\alpha)= \ker \alpha \cap \ker \dd \alpha$,
  $$i_{[X, Y]} \alpha = [i_{X}, \liedv{Y}] \alpha = i_{X} i_{Y} \dd \alpha = 0\, , \quad i_{[X, Y]} \dd \alpha =0\, ,$$
  and
  $$\liedv{X} \alpha = \dd (i_{X} \alpha) + i_{X} \dd \alpha = 0\, .$$
\end{proof}

\mn Hence, $\chi(\alpha)$ can be viewed as a foliation. Indeed, on a neighbourhood of each $x_0\in \vert M \vert$,  the Homogeneous Frobenius theorem (Theorem~\ref{theorem:Frobenius}) provides a system of homogeneous coordinates $(x^a)$ such that the projection $\pi\colon M \to M/\chi(\alpha)$ onto the space of leaves reads
$$\pi(x^1, \ldots, x^n) = (x^{r+1}, \ldots x^n)\, ,$$
with $n$ the total dimension of $n$, and $r$ the total rank of $\chi(\alpha)$.
Furthermore, $\alpha$ induces a local non-degenerate $k$-form $\alpha_{\mathrm{red}}$ on  $M/\chi(\alpha)$ uniquely determined by
$$\pi^\ast (\alpha_{\mathrm{red}}) = \alpha\, .$$
This reduced form is $w$-homogeneous with respect to the reduced weight vector field, i.e.,
$$\liedv{\n_\red} \alpha_\red = \liedv{\pi_\ast \n} (\pi^\ast \alpha) = \pi^\ast \left( \liedv{\n} \alpha \right) = w\cdot \alpha_\red\, .$$
Therefore, in order to construct Darboux coordinates for $\alpha$, we can first construct them for the non-degenerate form $\alpha_\red$, and then pullback by $\pi$.

\subsection{Homogeneous presymplectic forms}

\begin{definition}
   A \emph{presymplectic form} $\omega$ of on a supermanifold $M$ is a closed $2$-form of constant rank, i.e., the vector bundle morphism
   \begin{align*}
    \flat_\omega \colon \T M & \to \cT M\\
    v & \mapsto i_{v} \omega
   \end{align*}
   has constant rank. If this morphism is an isomorphism, it is called a \emph{symplectic form}. The pair $(M, \omega)$ is called a \emph{(pre)symplectic manifold}.
\end{definition}

\no Note that, since it is closed, the class of a presymplectic form is just its rank.

\mn Let us recall a fundamental result from \cite{Grabowska:2025}:

\begin{theorem}[Darboux theorem for homogeneous symplectic forms]\label{theorem:Darboux_symplectic}
  Let $\omega$ be a homogeneous symplectic form of degree $\lambda = (\sigma, w) \in \ZZ_2\times \RR$ on a homogeneity supermanifold $(M, \n)$. Then, around any point $x_0\in \vert M \vert$, there exists a system of homogeneous coordinates $(q^1, \ldots, q^r, p_1, \ldots p_r,$ $y^1, \ldots y^s)$ such that
    $$\omega = \sum_{i=1}^r \dd p_i \wedge \dd q^i + \sum_{l=1}^s  \epsilon^l \dd y^l \wedge \dd y^l\, ,$$
    where $\epsilon^l = \pm 1$. The coordinates $y^l$ appear only if $\omega$ is even (i.e., $\sigma = 0$).
\end{theorem}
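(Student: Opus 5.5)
We prove Theorem~\ref{theorem:Darboux_symplectic} by a homogeneous version of the Moser path method, combined with an induction on the dimension that splits off one canonical pair $(q,p)$, or one even coordinate $y$, at a time. We treat the cases $\n(x_0)=0$ and $\n(x_0)\neq 0$ separately.

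\emph{Case $\n(x_0)=0$.}
\begin{enumerate}
\item We pick homogeneous coordinates $(x^a)$ around $x_0$ with $x^a(x_0)=0$ and $\n=\sum_a w_a x^a\pa_{x^a}$. The constant-coefficient form $\omega_0$, obtained by freezing the coefficients of $\omega$ at $x_0$, is then homogeneous of the same degree $\lambda$, since $\n$ is linear in these coordinates.
\item A coefficient $\omega_{ab}(x_0)$ can be nonzero only if $w=w_a+w_b$ (Remark~\ref{remark:zero_homogeneous_functions}). Hence a homogeneous linear change of coordinates brings $\omega_0$ to the normal form $\sum \dd p_i\wedge\dd q^i+\sum\epsilon^l\dd y^l\wedge\dd y^l$. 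This is linear symplectic algebra in $\ZZ_2\times\RR$-graded vector spaces: pair the weight spaces $w_a$ with $w-w_a$, and note that the parity of $\omega$ decides whether even variables pair with even or with odd ones. The terms $\dd y\wedge\dd y$ occur only for odd coordinates with $2w_y=w$ when $\omega$ is even, i.e.\ the odd symmetric part.
\item Set $\omega_t=\omega_0+t(\omega-\omega_0)$. Each $\omega_t$ is closed, homogeneous of degree $\lambda$, and nondegenerate near $x_0$. By Lemma~\ref{lemma:Poincare}(1) we get $\omega-\omega_0=\dd\beta$ with $\beta$ homogeneous of degree $\lambda$ and $\beta(x_0)=0$; refining $\beta$ so that it vanishes to second order at $x_0$ is possible with the homotopy operator.
\item Define $X_t$ by $i_{X_t}\omega_t=-\beta$. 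Then $X_t$ is homogeneous of weight $0$, so it commutes with $\n$, and it vanishes at $x_0$. Its flow therefore exists up to $t=1$ near $x_0$, preserves $\n$, and pulls $\omega$ back to $\omega_0$. Pulling the linear coordinates back along this flow gives homogeneous Darboux coordinates.
\end{enumerate}

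\emph{Case $\n(x_0)\neq 0$.} By Proposition~\ref{proposition:homogeneous_coordinates} we choose coordinates $(x,z^i)$ with $\n=x\pa_x$ and $x(x_0)=1$. Moser's argument can fail here, because $\beta$ need not vanish at $x_0$, so we argue directly.
\begin{enumerate}
\item If $w\neq 0$, then $\omega=\frac1w\dd(i_\n\omega)$. The $1$-form $\theta=\frac1w i_\n\omega$ is a homogeneous primitive with $\theta(\n)=0$, and the vector field $\n$ is the Liouville-type field of $\omega$ with $\mathcal L_\n\omega=w\omega$.
\item Choose the even homogeneous function $q^1=x^{c}$ for a suitable weight $c$. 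The homogeneous Hamiltonian field $X_{q^1}$ is nonzero, and Lemma~\ref{lemma:homog_sol_diff_eq} or the straightening argument of Proposition~\ref{proposition:straightening} produces a homogeneous conjugate $p_1$ with $\{p_1,q^1\}=1$.
\item The fields $X_{q^1}$ and $X_{p_1}$ commute. To straighten them simultaneously by Proposition~\ref{proposition:straightening}, we need $\n(x_0)$ to be independent of $X_{q^1}(x_0)$ and $X_{p_1}(x_0)$. This is where the flexibility in choosing the weights of coordinates, allowed by Proposition~\ref{proposition:homogeneous_coordinates}, is used.
\item The symplectic complement of $\mathrm{span}\{X_{q^1},X_{p_1}\}$ is an involutive homogeneous distribution. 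Theorem~\ref{theorem:Frobenius} then gives a homogeneous product splitting, and induction on the dimension finishes the argument.
\item If $w=0$, we use Lemma~\ref{lemma:Poincare}(3). It gives $\omega=\dd\theta$ with $\theta$ of weight $0$, possibly with a $c\,\dd x/x$ part, which does not affect $\dd\theta$. We then run the same splitting.
\end{enumerate}

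The main obstacle is the case $\n(x_0)\neq0$: ensuring that every canonical pair produced can be taken homogeneous, with $\n$ kept transverse so that Proposition~\ref{proposition:straightening} applies. The fallback is to put $\n=x\pa_x$ in and choose $q^1=\log$-free powers of $x$ as the first coordinate.
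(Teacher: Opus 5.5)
The paper does not prove this theorem; it quotes it from \cite{Grabowska:2025}, so I am judging your argument on its own. Your case $\n(x_0)=0$ is sound. In homogeneous coordinates vanishing at $x_0$, only coefficients with $w_a+w_b=w$ survive in $\omega_0$, so $\omega_0$ has degree $\lambda$ and graded linear algebra normalizes it. Lemma~\ref{lemma:Poincare}(1) gives $\beta(x_0)=0$. The Moser field is then even, of weight $0$ and vanishing at $x_0$, so its flow exists up to $t=1$ and preserves $\n$. Second-order vanishing of $\beta$ is not needed.

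The gap is the case $\n(x_0)\neq 0$. Step~3 needs $\n(x_0)\notin F_{x_0}$, where $F=\operatorname{span}\{X_{q^1},X_{p_1}\}$, in order to apply Proposition~\ref{proposition:straightening} with $k=2$. That is a condition on the functions $q^1,p_1$, and the freedom of weights in Proposition~\ref{proposition:homogeneous_coordinates} does not address it. With your choice $q^1=x^c$ it already fails in a simple case. Take $\omega=\dd p_1\wedge\dd q_1+\dd p_2\wedge\dd q_2$ on $\RR^4$, $\n=q_1\pa_{q_1}-p_1\pa_{p_1}$, $x_0=(1,0,0,0)$ and $x=q_1$. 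The conjugate $p=c^{-1}q_1^{1-c}p_1$ (up to sign) has $X_p(x_0)$ proportional to $\n(x_0)=\pa_{q_1}$. For even $\omega$ on a supermanifold of dimension $(2|q)$ with $q\geq 1$, it fails for every even pair. There $X_{q^1}(x_0)$ and $X_{p_1}(x_0)$ span the whole even part of ${\mathsf T}_{x_0}M$, which contains $\n(x_0)$, yet the odd $y^l$ still have to be produced.

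Step~4 does not repair this. The symplectic complement of $F$ is $\ker\dd q^1\cap\ker\dd p_1$, whose first integrals are only functions of $(q^1,p_1)$. What you need are homogeneous first integrals of $F$ itself. Theorem~\ref{theorem:Frobenius} applied to $F$ lands in case (c) exactly when $\n(x_0)\in F_{x_0}$. By Counterexample~\ref{example:not_exist_Darboux_presymp}, the leaf space then need not carry a weight vector field, so your induction hypothesis is not available. There are also three smaller problems:
\begin{itemize}
\item Lemma~\ref{lemma:homog_sol_diff_eq} holds only near zeros of $\n$.
\item For odd $\omega$, $X_{q^1}$ is odd, and straightening it requires $[X_{q^1},X_{q^1}]=0$, which you do not check.
\item Lemma~\ref{lemma:Poincare}(3) concerns closed $1$-forms, not primitives of $\omega$.
\end{itemize}

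The missing idea is to exploit how $\n$ acts on $\omega$; your $\theta$ is computed but never used. Here is one way to close the gap.
\begin{itemize}
\item If $w\neq 0$: $i_\n\theta=0$ and $\liedv{\n}\theta=w\theta$ give $\theta=x^w\eta$, with $\eta$ involving only the $z$'s. Nondegeneracy of $\omega=\dd(x^w\eta)$ makes $\eta$ contact on $\{x=1\}$. An ordinary contact Darboux chart $\eta=\dd z+\sum p_i\dd q^i+\sum\epsilon^l y^l\dd y^l$ then yields the homogeneous chart $(x^w,z,x^wp_i,q^i,x^{w/2}y^l)$, using $(y^l)^2=0$.
\item If $w=0$: $i_\n i_\n\omega=0$ forces $c=0$ in Lemma~\ref{lemma:Poincare}(3), so $i_\n\omega=\dd g$ with $g$ of weight $0$. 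With $t=\log x$ one gets $\omega=\pm\dd t\wedge\dd g+B$, with $B$ closed in the $z$'s and of corank one, its kernel transverse to $\ker\dd g$. An ordinary presymplectic Darboux chart for $B$, with $g$ as the transverse coordinate, consists of weight-$0$ functions. Replacing $(t,g)$ by $(e^{\pm t},\pm g\,e^{\mp t})$ then gives homogeneous Darboux coordinates.
\end{itemize}
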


\no Combining this theorem and the Homogeneous Frobenius theorem (Theorem~\ref{theorem:Frobenius}), we can prove the following.

\begin{corollary}[Darboux theorem for homogeneous presymplectic forms]\label{theorem:Darboux_presymplectic}
  Let $\omega$ be a homogeneous presymplectic form of rank $2r+s$ and degree $\lambda = (\sigma, w) \in \ZZ_2\times \RR$ on a homogeneity supermanifold $(M, \n)$. Then, around any point $x_0\in \vert M \vert$ such that either $\n(m)=0$ or $\n(m)\neq 0$ and $\n(m)\notin \ker \omega_m$, there exists a system of homogeneous coordinates $(q^1, \ldots, q^r, p_1, \ldots, p_r, y^1, \ldots, y^s,$ $z^1, \ldots, z^{k})$ such that
  $$\omega = \sum_{i=1}^r \dd p_i \wedge \dd q^i + \sum_{l=1}^s  \epsilon^l \dd y^l \wedge \dd y^l\, ,$$
  where $k=\dim M -2r-s$ and $\epsilon^l = \pm 1$. The coordinates $y^l$ appear only if $\omega$ is even (i.e., $\sigma = 0$).
\end{corollary}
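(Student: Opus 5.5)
The plan is to reduce to the symplectic case, Theorem~\ref{theorem:Darboux_symplectic}, by quotienting out the kernel of $\omega$. Because $\omega$ is closed, its characteristic set is $\chi(\omega)=\ker\omega\cap\ker\dd\omega=\ker\omega$. The constant rank assumption then makes $\ker\omega=\ker\flat_\omega$ a distribution of rank $k=\dim M-2r-s$, so $\omega$ is regular. By Proposition~\ref{proposition:charact_dist_invariant}, $\ker\omega$ is involutive and $\omega$ is $\ker\omega$-invariant.

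The first step is to check that $\ker\omega$ is homogeneous. If $X\in\ker\omega$, then
$$i_{[\n,X]}\omega=\liedv{\n}i_X\omega-i_X\liedv{\n}\omega=0-w\,i_X\omega=0,$$
so $\ker\omega$ is stable under $\liedv{\n}$. This is equivalent to $\dt\n$ being tangent to $\ker\omega\subset\T M$. Alternatively, one can argue through the annihilator: the image of $\flat_\omega$ is generated by the homogeneous forms $i_{\pa_{x^a}}\omega$, and one then applies Theorem~\ref{homdist}.

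Next, apply the Homogeneous Frobenius Theorem~\ref{theorem:Frobenius} to $D=\ker\omega$ at $x_0$. The hypothesis, $\n(x_0)=0$ or $\n(x_0)\notin\ker\omega_{x_0}$, is exactly what places us in case (a) or (b). This yields homogeneous coordinates $(x^1,\dots,x^k,u^1,\dots,u^{2r+s})$ with $\ker\omega=\gen{\pa_{x^1},\dots,\pa_{x^k}}$, and in case (b) we additionally have $\n=u^{2r+s}\pa_{u^{2r+s}}$. The weight vector field therefore involves only the $u$-coordinates, both in case (a), where $\n=\sum w_a x^a\pa_{x^a}+\sum v_b u^b\pa_{u^b}$ with diagonal form, and in case (b). Consequently $\n$ is projectable to the local leaf space $N$, which has coordinates $(u^b)$, and the reduced field $\n_\red$ is a weight vector field there. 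Since $i_{\pa_{x^i}}\omega=0$ and $\liedv{\pa_{x^i}}\omega=0$, we get $\omega=\pi^*\omega_\red$. Here $\omega_\red$ is closed and non-degenerate on $N$, so it is symplectic, and by the computation after Proposition~\ref{proposition:charact_dist_invariant} it is homogeneous of degree $\lambda$ with respect to $\n_\red$.

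Finally, apply Theorem~\ref{theorem:Darboux_symplectic} on $N$ around $\pi(x_0)$. This gives homogeneous coordinates $(q^i,p_i,y^l)$ on $N$ in which $\omega_\red$ takes the normal form, with the $y^l$ appearing only when $\sigma=0$. Pulling these back by $\pi$ and adjoining $z^j=x^j$ gives homogeneous coordinates on $M$, because $\pi$ relates $\n$ and $\n_\red$. In these coordinates $\omega$ has the stated form, and the count of even and odd coordinates matches the rank $2r+s$.

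The step I expect to be delicate is verifying that $\pi$ is a morphism of homogeneity supermanifolds, i.e.\ that $\n$ has no components along $\pa_{x^i}$ in the Frobenius coordinates. This is why case (c) must be excluded. In cases (a) and (b) it follows because the coordinates produced are homogeneous and $\n$ is diagonal in them, so the functions $u^b$ pulled back are homogeneous.
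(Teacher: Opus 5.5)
Your proposal is correct and follows the route the paper intends: the paper gives no separate proof, only ``combining'' Theorem~\ref{theorem:Darboux_symplectic} with the Homogeneous Frobenius Theorem via the leaf-space reduction described after Proposition~\ref{proposition:charact_dist_invariant}, and that is exactly your argument (homogeneity and involutivity of $\ker\omega=\chi(\omega)$, cases (a)/(b) of Theorem~\ref{theorem:Frobenius}, reduction to a homogeneous symplectic form $\omega_\red$ for $\n_\red$, then pulling back its homogeneous Darboux coordinates). One wording fix in your last paragraph: what must be checked is that the $\pa_{u^b}$-components of $\n$ depend only on $u$ (true because $\n(u^b)=v_bu^b$), not that $\n$ has no $\pa_{x^i}$-components, which is false in case (a); likewise, case (c) is excluded not because projectability fails (homogeneity of $\ker\omega$ already makes $\n$ projectable) but because $\pi_*\n$ need not be a weight vector field near $\pi(x_0)$.
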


\begin{remark}
  Homogeneous Darboux coordinates for $\omega$ may not exist on a neighbourhood of a point $m\in |M|$ such that $\n(m)\neq 0$ and $\n(m) \in \ker\omega_m$.
\end{remark}

\begin{counterexample}\label{example:not_exist_Darboux_presymp}
  Let $M=\RR^3$, with canonical coordinates $(q, p, z)$. The vector field
   $$\n = \pa_z - \sin(q) \pa_q + p \cos(q) \pa_p$$
  is a weight vector field, because it is nowhere-vanishing. The two-form $\omega = \dd p \wedge \dd q$ is a presymplectic form of corank $1$, with $\ker \omega = \gen{\pa_z}$, and it is homogeneous of weight $0$.

  On the quotient space $N = \RR^3/\ker(\omega) \cong \{z = 0\}$, with natural coordinates $(q, p)$, we have the symplectic form $\tilde{\omega} = \dd p \wedge \dd q$. However, $\n$ does not induce a weight vector field on the quotient. Indeed,
  $$\tilde{\n} \coloneqq \pi_\ast \n = - \sin(q) \pa_q + p \cos(q) \pa_p\, ,$$
  where $\pi\colon M\to N$ is the natural projection, is not a weight vector field. The general solution $f_w$ of the differential equation
  $$\tilde{\n} f_w = w \cdot f_w\, ,\quad w\in \RR\, ,$$
  is given by
  $$f_w(q, p) = \left(\frac{\cos(q/p)}{\sin(q/2)}\right)^{w} F(p\sin q)\, ,$$
  for some function $F$. In particular, $f_w$ is not well-defined at $(0,0)$ for $w\neq 0$, while
  $$\dd f_0 (0, 0) = 0\, .$$
  Thus, there exist no homogeneous coordinates around the point $(q,p)=(0,0)$. Consequently, there do not exist homogeneous Darboux coordinates for $\omega$ (if they existed, they would induce homogeneous Darboux coordinates for   $\tilde{\omega}$).
\end{counterexample}

\subsection{Homogeneous one-forms of constant class}

Let $\alpha$ be a non-zero regular one-form on a supermanifold $M$. The annihilator of the characteristic distribution is given by
$$\big(\chi(\alpha)\big)^\circ = (\ker \alpha)^\circ + (\ker \dd \alpha)^\circ = \gen{\alpha} + \Im(\flat_{\dd \alpha})\, ,$$
where $\flat_{ \dd \alpha} \colon \T M \ni v \mapsto i_{v} \dd \alpha\in \cT M$. There are three possible cases:
\begin{enumerate}
  \item if $\dd \alpha = 0$, then $\class(\alpha) = 1$;
  \item if $\gen{\alpha} \cap \Im(\flat_{\dd \alpha}) = \{0_M\}$, where $0_M$ denotes the zero section of $\T M$, then $\class(\alpha) = \rank(\flat_{\dd \alpha}) + 1$;
   \item if $\gen{\alpha} \subseteq \Im(\flat_{\dd \alpha})$, then $\class(\alpha) = \rank(\flat_{\dd \alpha})$.
\end{enumerate}
In particular, $\alpha$ is non-degenerate if and only if $\big(\chi(\alpha)\big)^\circ = \cT M$, and the possible cases are now:
\begin{enumerate}
  \item $\cT M = \gen{\alpha} \oplus \Im(\flat_{\dd \alpha}) \text{ or } \dim M = 1 \Longrightarrow \dim M = \rank (\flat_{\dd \alpha}) +1$,
  \item $\cT M = \Im(\flat_{\dd \alpha})\Longrightarrow \dim M = \rank (\flat_{\dd \alpha})$.
\end{enumerate}
Here $\dim M$ denotes the total dimension of $M$.

\mn It is now easy to see (cf. \cite[Chapter VI]{Godbillon:1969}) that, on purely even manifolds, our class of a Pfaffian form coincides with the one defined by Darboux.
\begin{definition}
  A non-degenerate one-form $\alpha$ on a supermanifold $M$ is called a \emph{contact form} if $\dim M = \rank (\flat_{\dd \alpha}) +1$, and a \emph{symplectic potential} if $\dim M = \rank (\flat_{\dd \alpha})$.
\end{definition}
\no The definition of contact form is equivalent to the one in Section 4 from \cite{Grabowski:2013}. On the other hand, a symplectic potential is just a one-form whose differential is a symplectic form.
It is straightforward to verify the following.
\begin{proposition}\label{proposition:Reeb}
  Let $\alpha$ be a homogeneous contact form of degree $\lambda = (\sigma, w)\in \ZZ_2 \times \RR$ on a homogeneity supermanifold $(M, \n)$. Then, there is a unique vector field $\Reeb$ on $M$ such that
  $$i_{\Reeb} \alpha =1\, , \quad \Reeb \in \ker \dd \alpha\, .$$
  This vector field is nowhere-vanishing and homogeneous of degree $(\sigma, -w)$. It is called the \emph{Reeb vector field} of $\alpha$.
\end{proposition}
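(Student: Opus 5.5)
Everything reduces to linear algebra of the contact form in one fibre, followed by a weight computation.

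For existence and uniqueness, non-degeneracy of the contact form $\alpha$ gives $\chi(\alpha)=\ker\alpha\cap\ker\dd\alpha=0$. Since $\dim M=\rank(\flat_{\dd\alpha})+1$, the module $\ker\dd\alpha$ is a rank-one (total rank) subbundle of $\T M$, locally freely generated by a single nowhere-vanishing vector field $Z$. We have $\ker\alpha\cap\gen{Z}=0$. In the super setting this means the function $i_Z\alpha$ is invertible, i.e. its body is nowhere zero; otherwise $Z$ multiplied by a suitable nilpotent-free element would lie in $\chi(\alpha)$. We must check that $Z$ has the right parity to pair nontrivially with $\alpha$. If $\alpha$ is odd, then $Z$ is odd, so $i_Z\alpha$ is even. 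This follows from $\cT M=\gen{\alpha}\oplus\Im(\flat_{\dd\alpha})$ together with the parity count of $\ker\flat_{\dd\alpha}$.

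We then set $\Reeb=(i_Z\alpha)^{-1}Z$, using the appropriate side so that $i_\Reeb\alpha=1$. Since $\ker\dd\alpha$ is a $\Cinfty$-module, $\Reeb\in\ker\dd\alpha$.

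Uniqueness: if $\Reeb'$ also satisfies both conditions, then $\Reeb-\Reeb'\in\ker\alpha\cap\ker\dd\alpha=\chi(\alpha)=0$. Locally defined Reeb fields therefore glue to a global one. $\Reeb$ is nowhere vanishing because $i_\Reeb\alpha=1$.

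For homogeneity, apply $\liedv{\n}$ to the defining relations. We have $\liedv{\n}\alpha=w\alpha$ and, since $\dd$ commutes with $\liedv{\n}$, also $\liedv{\n}\dd\alpha=w\,\dd\alpha$. Set $\Reeb'=[\n,\Reeb]+w\Reeb$. Using $[\liedv{\n},i_\Reeb]=i_{[\n,\Reeb]}$, the first relation gives
$$0=\liedv{\n}(i_\Reeb\alpha)=i_{[\n,\Reeb]}\alpha+w\,i_\Reeb\alpha ,$$
so $i_{\Reeb'}\alpha=0$. Likewise the second relation gives $i_{[\n,\Reeb]}\dd\alpha+w\,i_\Reeb\dd\alpha=0$, so $i_{\Reeb'}\dd\alpha=0$. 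Hence $\Reeb'\in\chi(\alpha)=0$, i.e. $\liedv{\n}\Reeb=[\n,\Reeb]=-w\Reeb$.

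For parity, $i_\Reeb\alpha=1$ is even, so $|\Reeb|+\sigma=0$ and $\Reeb$ has parity $\sigma$. Thus $\deg\Reeb=(\sigma,-w)$. The uniqueness argument also shows directly that the even vector field $\n$ cannot change the parity.

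The main obstacle is the super-linear-algebra step: showing that $\ker\dd\alpha$ is generated by a single $Z$ of parity $\sigma$ with $i_Z\alpha$ invertible. I would handle this pointwise at the body using the splitting $\cT M=\gen{\alpha}\oplus\Im(\flat_{\dd\alpha})$, then extend to a neighbourhood by the constant-rank assumption. The homogeneity part is routine once uniqueness is available.
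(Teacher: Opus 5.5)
Your proof is correct. It is the routine verification the paper omits (the paper only says the statement is ``straightforward to verify''): you take a generator $Z$ of the rank-one module $\ker\dd\alpha$ with $i_Z\alpha$ invertible, which forces $|Z|=\sigma$; uniqueness comes from $\chi(\alpha)=0$; and the weight $-w$ comes from applying $\liedv{\n}$ to both defining relations.

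The one thing to fix is the module-level justification of invertibility, where you multiply $Z$ by some element. Invertibility must come instead, as your last paragraph says, from the fibrewise splitting $\cT_xM=\gen{\alpha_x}\oplus\operatorname{Im}(\flat_{\dd\alpha})_x$ at body points. For example, $\alpha=z\,\dd z+\dd x+x\,\dd y$ on $\R^3$ has $\ker\alpha\cap\ker\dd\alpha=0$ as a module of vector fields, yet $i_{\pa_z}\alpha=z$ is not invertible and no Reeb field exists along $z=0$.
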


\begin{definition}
  A regular one-form $\alpha$ on a supermanifold $M$ is called a \emph{precontact form} (respectively, a \emph{presymplectic potential}) if the (local) reduced one-form $\alpha_\red$ on $M/\chi(\alpha)$ is a contact form (respectively, a symplectic potential).
\end{definition}
\no The reduced one-form $\alpha_\red$ may not be defined globally, because the foliation $\pi\colon M\to M/\chi(\alpha)$ may not be simple. Nevertheless,
the notions of precontact form and presymplectic potential are well-defined, as, by definition, the characteristic distribution of a regular one-form has constant rank.

\begin{theorem}[Darboux theorem for homogeneous one-forms]\label{theorem:Darboux_one_forms}
   Let $\alpha$ be a regular homogeneous one-form of degree $\lambda = (\sigma, w)\in \ZZ_2 \times \RR$ on a homogeneity supermanifold $(M, \n)$.

   Around each point $x_0\in \vert M \vert$ such that either $\n(m)=0$ or $\n(m)\neq 0$ and $\n(m)\notin \chi(\alpha)_m$, the one-form $\alpha$ has one of the following canonical expressions:
   \begin{enumerate}
    \item If $\alpha$ is a precontact form of class $2r+s+1$ and $\n(x_0) = 0$ or $w\neq 0$,
    $$\alpha=\dd z +  \sum_{i=1}^r  p_i \dd q^i + \sum_{l=1}^s  \epsilon^l y^l \dd y^l, \quad \epsilon^l= \pm 1\, ,$$
    in a certain system of homogeneous coordinates $(q^i, p_i, z, y^l, x^a)$.
    \item If $\alpha$ is a precontact form of class $2r+s+1$, $\n(x_0) \neq 0$ and $w= 0$,
    \begin{equation}\label{eq:canonical_contact_zero}
      \alpha=\frac{\dd z}{z}+  \sum_{i=1}^r  p_i \dd q^i + \sum_{l=1}^s  \epsilon^l y^l \dd y^l, \quad \epsilon^l= \pm 1\, ,
    \end{equation}
    in a certain system of homogeneous coordinates $(q^i, p_i, z, y^l, x^a)$.
    \item If $\alpha$ is a presymplectic potential of class $2r+s$,
    $$\alpha = \sum_{i=1}^r  p_i \dd q^i  + \sum_{l=1}^s  \epsilon^l y^l \dd y^l, \quad \epsilon^l= \pm 1\, ,$$
    in a certain system of homogeneous coordinates $(q^i, p_i, y^l, x^a)$
   \end{enumerate}
   The coordinates $(y^l)$ only appear if $\alpha$ is even, i.e., $\sigma = 0$.
\end{theorem}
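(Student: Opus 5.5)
We begin by reducing to the non-degenerate case. By Proposition~\ref{proposition:charact_dist_invariant}, $\chi(\alpha)$ is an involutive distribution. It is homogeneous: since $\liedv{\n}\alpha=w\alpha$ and $\liedv{\n}\dd\alpha=w\,\dd\alpha$, the bracket $[\n,X]$ lies in $\chi(\alpha)$ for every section $X$ of $\chi(\alpha)$. Under the hypothesis $\n(x_0)=0$ or $\n(x_0)\notin\chi(\alpha)_{x_0}$, we are in case (a) or (b) of Theorem~\ref{theorem:Frobenius}. This gives homogeneous coordinates $(x^a,\tilde x)$ in which $\chi(\alpha)=\gen{\pa_{x^a}}$ and $\alpha=\pi^\ast\alpha_\red$. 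Moreover $\n$ projects to a genuine weight vector field on $M/\chi(\alpha)$: in case (b) it becomes $\tilde x^n\pa_{\tilde x^n}$, nonvanishing. The form $\alpha_\red$ is homogeneous of the same degree and is non-degenerate. It therefore suffices to prove the statement for a contact form or a symplectic potential; the coordinates $x^a$ are then appended.

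\emph{Symplectic potential (case 3).} Here $\omega=\dd\alpha$ is a homogeneous symplectic form of degree $\lambda$. By Theorem~\ref{theorem:Darboux_symplectic} we take homogeneous Darboux coordinates $(q^i,p_i,y^l)$ for $\omega$. Then $\beta=\alpha-\sum p_i\dd q^i-\sum\tfrac12\epsilon^l\,\dd(y^l y^l)$ is a closed homogeneous one-form of degree $\lambda$; in the odd-$y$ case we keep $\epsilon^l y^l\dd y^l$, with normalisations absorbed into $y^l$. By Lemma~\ref{lemma:Poincare}, $\beta=\dd f$ with $f$ homogeneous of degree $\lambda$, apart from the exceptional case $\n(x_0)\ne0$, $w=0$. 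We then absorb $f$ by a homogeneous symplectomorphism, e.g.\ replacing $p_1$ by $p_1+\pa f/\pa q^1$-type corrections. More cleanly, we run the standard Moser/Liouville argument: the Liouville field $Z$ defined by $i_Z\omega=\alpha$ is homogeneous, and we straighten the pair $(\omega,Z)$. I would first try the direct route: choose the Darboux chart so that $\alpha$ vanishes on a homogeneous Lagrangian foliation, using that $\ker\alpha\cap\ker$-type isotropic data are homogeneous. The exceptional case $w=0$, $\n(x_0)\neq0$ (a possible $c\,\dd x/x$ term) must be excluded. For this we use that $\dd\alpha$ has weight $0$, hence $\omega(\n,\cdot)$ has weight $0$; we expect to show $c=0$ by evaluating $i_\n\beta$, which is a weight-$0$ function. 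This is the delicate point.

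\emph{Contact forms (cases 1, 2).} By Proposition~\ref{proposition:Reeb}, the Reeb field $R$ is homogeneous and nonvanishing, and it satisfies $\liedv R\alpha=0$. If $\n(x_0)=0$ or $\n(x_0)\notin\gen{R}$, Proposition~\ref{proposition:straightening} gives homogeneous coordinates with $R=\pa_z$. Then $\alpha=\dd z+\alpha'$, where $\alpha'$ is independent of $z$ and $i_{\pa_z}\alpha'=0$. Thus $\alpha'$ is a symplectic potential on the quotient by $R$, whose weight vector field is again well defined, and the symplectic-potential case applies, up to a closed correction $\dd g$ absorbed into $z$. Homogeneity of $z$ with $\w(z)=w$ is automatic when $w\neq0$. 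If $w=0$ and $\n(x_0)\neq0$, a coordinate with $\dd z$ of weight $0$ cannot be homogeneous and nonconstant unless it is $\log$-type. We therefore set $z=e^{\tilde z}$, which gives the form \eqref{eq:canonical_contact_zero} with $z$ homogeneous of weight $\neq0$; this also covers the subcase $\n(x_0)\in\gen{R_{x_0}}$, where $\n$ is essentially a multiple of $R$ up to the quotient directions. The main obstacle, in my view, is this $w=0$, $\n(x_0)\ne0$ bookkeeping: it requires choosing coordinates with $\n=z\pa_z$ compatible with straightening $R$, and checking that the reduced weight vector field still admits homogeneous coordinates.
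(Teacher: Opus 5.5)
\textbf{Case~3 is not proved.} Your reduction to the non-degenerate case matches the paper's. For a symplectic potential, however, you stop at $\alpha=\sum_i p_i\,\dd q^i+\sum_l\epsilon^l y^l\dd y^l+\dd f$ and only list strategies: a ``$p_1+\pa f/\pa q^1$-type'' correction, Moser/Liouville, a homogeneous Lagrangian foliation. None is carried out, and the first works only if $f$ depends on the $q^i$ alone.

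Your contact case is phrased as an application of case~3, but it does not need it. Once $R=\pa_z$, Theorem~\ref{theorem:Darboux_symplectic} on the quotient normalizes $\dd\alpha'$, and the closed homogeneous remainder is absorbed into $z$ (exponentiated when $w=0$, $\n(x_0)\neq0$). That is a legitimate variant of the paper's argument. The paper does not straighten $R$; it applies Corollary~\ref{theorem:Darboux_presymplectic} to $\dd\alpha$ on $M$ and takes $z$ to be a primitive of the remainder.

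For case~3, the paper applies the contact case to $\eta=\dd t+\alpha$ on $M\times\RR$ (resp.\ $M\times\RR^{0|1}$) with $\overline{\n}=\n+wt\,\pa_t$, whose Reeb field is $\pa_t$. Its final step ``$z=t$'' is unjustified. The identity $i_{\pa_z}\dd t=1$ only gives $t=z+g$ with $g$ independent of $z$, so $\alpha=\sum_ip_i\,\dd q^i+\dots-\dd g$. That is exactly your absorption problem.

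Without further hypotheses it cannot be solved. On $\RR^2$, $\alpha=\tfrac12(p\,\dd q-q\,\dd p)$ is a regular symplectic potential, homogeneous of weight $0$ for $\n=q\pa_q-p\pa_p$, and it vanishes only at the origin. By contrast, a form $p'\dd q'$ that vanishes at a point vanishes along the whole curve $\{p'=0\}$. So case~3 needs at least $\alpha(x_0)\neq0$ (as ``Pfaffian'' suggests), and then a real argument. Your Liouville idea is a reasonable start: $i_Z\dd\alpha=\alpha$ defines $Z$ with $[\n,Z]=0$ and $\liedv{Z}\alpha=\alpha$.

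\textbf{The subcase $\n(x_0)\neq0$, $\n(x_0)\in\gen{R_{x_0}}$.} You claim this subcase is ``covered'' without giving an argument, and no argument exists. Proposition~\ref{proposition:straightening} does not apply there. Neither does the paper's Corollary~\ref{theorem:Darboux_presymplectic}, which needs $\n(x_0)\notin\ker(\dd\alpha)_{x_0}$; the hypothesis $\n(x_0)\notin\chi(\alpha)_{x_0}$ does not give this.

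In fact the conclusion fails. Take $\alpha=\dd z+p\,\dd q$ on $\RR^3$ and $\n=(1-q^2p^2)\pa_z+(1+2qp)(q\pa_q-p\pa_p)$. This $\n$ is non-vanishing near $0$, hence a weight vector field; it satisfies $\liedv{\n}\alpha=0$ and $\n(0)=R(0)=\pa_z$. Suppose homogeneous coordinates gave $\alpha=\dd z'/z'+p'\dd q'$. Then $R=z'\pa_{z'}$, so $q',p'$ would be homogeneous coordinates near the origin of the $(q,p)$-plane for $\tilde\n=(1+2qp)(q\pa_q-p\pa_p)$. One of them would have weight $1$, since the weights at a zero are the eigenvalues $\pm1$ of the linearization. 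But comparing the coefficients of $p$ and of $q^2p$ in $\tilde\n u=u$ forces $\dd u(0)=0$, a contradiction.

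Hence the precontact cases require $\n(x_0)=0$ or $\n(x_0)\notin\ker(\dd\alpha)_{x_0}$. After reduction this is exactly your straightening hypothesis $\n(x_0)\notin\gen{R_{x_0}}$, and under it your contact argument is sound.
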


\begin{remark}
  We do not consider one-forms with singularities. The homogeneous coordinate $z$ appearing in \eqref{eq:canonical_contact_zero} is nowhere-vanishing on its domain. Despite the resemblance on the expression, $\alpha$ should not be confused with a b-contact form (\textit{cf.} \cite[Theorem 5.4]{Miranda:2023}).
\end{remark}

\begin{proof}
  If $\alpha$ is degenerate, then we can obtain the canonical expression for the (local) reduced form $\alpha_\red$ on $M/\chi(\alpha)$ and pullback by $\pi\colon M \to M/\chi(\alpha)$ (see the argument below Proposition~\ref{proposition:charact_dist_invariant}).
  Consequently, the rest of the proof will be devoted to non-degenerate forms.

  \mn If $\alpha$ is an even contact form, then $\dd \alpha$ is an even homogeneous presymplectic form. By Corollary~\ref{theorem:Darboux_presymplectic}, there exists a homogeneous system of coordinates $(q^i, p_i, \overline{z}, \xi^l)$ around $x_0$ such that
  $$\dd \alpha = \sum_{i=1}^r \dd p_i \wedge \dd q^i + \sum_{l=1}^s  \epsilon^l \dd \xi^l \wedge \dd \xi^l\, ,$$
  where $\epsilon^l = \pm 1$. Therefore, the one-form
  $$\alpha' = \alpha - \sum_{i=1}^r  p_i \dd q^i - \sum_{l=1}^s  \epsilon^l \xi^l \dd \xi^l$$
  is closed and $\lambda$-homogeneous. There are two different cases (see Lemma~\ref{lemma:Poincare}):
  \begin{enumerate}
    \item If $\nabla(x_0)=0$ or $w\neq 0$, there exists a $\lambda$-homogeneous function $z$ such that $\dd z = \alpha'$. Therefore,
    \begin{equation}\label{eq:canonical_contact}
      \alpha = \dd z +  \sum_{i=1}^r  p_i \dd q^i + \sum_{l=1}^s  \epsilon^l \xi^l \dd \xi^l\, .
    \end{equation}
    \item If $\nabla(x_0)\neq 0$ and $w=0$, we can apply the standard Poincaré Lemma, obtaining a (not homogeneous) function $f$ such that $\dd f = \alpha'$ and $f(x_0) = 0$. Therefore,
    \begin{equation}\label{eq:precanonical_contact_zero}
    \alpha = \dd f +  \sum_{i=1}^r  p_i \dd q^i + \sum_{l=1}^s  \epsilon^l \xi^l \dd \xi^l\, .
    \end{equation}
    The fact that $\alpha$ and $\dd \alpha$ have weight $0$ implies that
    $$\w(q^i) + \w(p_i) = 0 = \w(\xi^l)\, ,\quad \forall\, i,l\, ,$$
    and then
    $$0 = \liedv{\nabla} \alpha = \liedv{\nabla} \dd f\, .$$
    Thus, $\n(f) = v$ for some constant $v\in \RR$. The function $z=e^f$ is homogeneous of weight $v$, and the contact form can be written as
    $$ \alpha = \frac{\dd z}{z} +  \sum_{i=1}^r  p_i \dd q^i + \sum_{l=1}^s  \epsilon^l \xi^l \dd \xi^l\, . $$
  \end{enumerate}
  In both cases, it just remains to verify that $z$ is a coordinate, which follows from the fact that $\cT M = \gen{\alpha} \oplus \Im (\flat_{\dd \alpha})$ and that $\dd \alpha$ is independent of $z$. If $\alpha$ is an odd contact form, then $\dd \alpha$ is an odd homogeneous presymplectic form, and the proof is analogous.

  \mn If $\alpha$ is an even symplectic potential, then we can construct the even contact form
  $$\eta = \dd t + \alpha\, ,$$
  on $M\times \RR$,  where $t$ is the canonical coordinate on $\RR$. We can endow $M\times \RR$ with the weight vector field
  $$\overline{\n} = \n + w t \pa_t\, ,$$
  making $\eta$ homogeneous of degree $\lambda=(\sigma, w)$.
  If $\n(x_0) = 0$ or $w\neq 0$, there exists a system of homogeneous coordinates $(q^i, p_i, z, \xi^l)$ around $(x_0, 0)$ where $\eta$ is written like the right-hand side of \eqref{eq:canonical_contact}. The Reeb vector field of $\eta$ is given by $\Reeb = \pa_{t} = \pa_{z}$. It satisfies $i_{\Reeb} \alpha = 0$ and $i_{\Reeb} \dd \alpha = 0$, which means that $\alpha$ is independent of the coordinate $z$. In addition, $z=t$, since $i_{\pa_z} \dd t = i_{\Reeb} \dd t=1$ and $z(x_0, 0) = 0$. Consequently, the coordinates $(q^i, p_i, \xi^l)$ project into coordinates on $M$, which are canonical coordinates for $\alpha$.

  \mn A completely analogous technique can be used for the case $\n(x_0) \neq 0$ and $w = 0$, by considering the (not homogeneous) system of coordinates $(q^i, p_i, f, \xi^l)$ where $\eta$ is written like the right-hand side of \eqref{eq:precanonical_contact_zero} and $\Reeb = \pa_f$. Once again, the coordinates $(q^i, p_i, \xi^l)$ are homogeneous and project into coordinates on $M$, which are canonical coordinates for $\alpha$.

  \mn Finally, if $\alpha$ is an odd symplectic potential, a similar technique can be employed by constructing the odd contact form $\eta = \dd \xi + \alpha$ on $M\times \RR^{0\vert 1}$, with $\xi$ the canonical coordinate on $\xi$.

\end{proof}

\begin{counterexample}
  Homogeneous Darboux coordinates for $\alpha$ may not exist around a point $m\in |M|$ such that $\n(m)\neq 0$ and $\n(m) \in \chi(\alpha)_m$.
  \mn Let $M=\RR^3$, with canonical coordinates $(q, p, z)$, equipped with the weight vector field $\n$ and the presymplectic potential $\omega$ from Example~\ref{example:not_exist_Darboux_presymp}. Consider the presymplectic potential $\alpha = p\, \dd q$. Note that $\alpha$ is homogeneous of weight $0$ and $\dd \alpha = \omega$. If there existed homogeneous Darboux coordinates for $\alpha$, in particular they would be homogeneous Darboux coordinates for $\omega$. Hence, $\alpha$ does not have homogeneous Darboux coordinates around $m=(0,0,0)$.
\end{counterexample}
\no From either the definitions or the local expressions, it is straightforward to verify the following.

\begin{proposition}
  A nowhere-vanishing one-form $\alpha$ on a supermanifold $M$ is regular if and only if its differential $\dd \alpha$ is a presymplectic form. More precisely,
  \begin{itemize}
    \item $\alpha$ is a precontact form of class $2r+s+1$ if and only if $\dd \alpha$ is a presymplectic form of rank $2r+s$ and $\gen{\alpha} \cap \Im(\flat_{\dd \alpha}) = \{0_M\}$,
    \item $\alpha$ is a presymplectic potential of class $2r+s$ if and only if $\dd \alpha$ is a presymplectic form of rank $2r+s$ and $\gen{\alpha} \subseteq \Im(\flat_{\dd \alpha})$.
  \end{itemize}
\end{proposition}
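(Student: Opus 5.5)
We prove the two directions separately. The tool is the description of the annihilator $\big(\chi(\alpha)\big)^\circ = \gen{\alpha} + \Im(\flat_{\dd\alpha})$ given at the start of this subsection. Since $\alpha$ is nowhere-vanishing, $\gen{\alpha}$ is a rank-one subbundle of $\cT M$, and it is even or odd according to the parity of $\alpha$.

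\emph{Direction ($\Leftarrow$).} Assume $\dd\alpha$ is presymplectic of rank $2r+s$. Then $\Im(\flat_{\dd\alpha})$ is a subbundle of constant rank $2r+s$, i.e.\ a codistribution, and $\ker\dd\alpha$ is a distribution.

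In the first case, $\gen{\alpha}\cap\Im(\flat_{\dd\alpha})=\{0_M\}$ holds pointwise. The sum $\gen{\alpha}\oplus\Im(\flat_{\dd\alpha})$ is then a direct sum of subbundles of total rank $2r+s+1$. Its annihilator $\chi(\alpha)$ is therefore a distribution of corank $2r+s+1$, so $\alpha$ is regular of class $2r+s+1$.

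In the second case, $\gen{\alpha}\subseteq\Im(\flat_{\dd\alpha})$, so $\big(\chi(\alpha)\big)^\circ=\Im(\flat_{\dd\alpha})$ and the class is $2r+s$.

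To identify $\alpha$ as precontact or presymplectic potential, we pass to the local quotient $\pi\colon M\to M/\chi(\alpha)$. Because $\alpha$ and $\dd\alpha$ are basic (Proposition~\ref{proposition:charact_dist_invariant}), we have $\pi^*\alpha_\red=\alpha$ and $\pi^*\dd\alpha_\red=\dd\alpha$. Moreover, $\pi^*$ identifies $\cT(M/\chi(\alpha))$ with $\big(\chi(\alpha)\big)^\circ$ fibrewise and carries $\Im(\flat_{\dd\alpha_\red})$ onto $\Im(\flat_{\dd\alpha})$. Hence the reduced dichotomy $\cT = \gen{\alpha_\red}\oplus\Im(\flat_{\dd\alpha_\red})$ versus $\cT=\Im(\flat_{\dd\alpha_\red})$ is exactly the two cases above. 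That is the definition of contact form versus symplectic potential.

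\emph{Direction ($\Rightarrow$).} Assume $\alpha$ is regular. Then $\chi(\alpha)$ has constant corank $c$, and $\big(\chi(\alpha)\big)^\circ=\gen{\alpha}+\Im(\flat_{\dd\alpha})$ has constant rank $c$. The quickest route is through the local expressions of Theorem~\ref{theorem:Darboux_one_forms}, taken in its non-homogeneous form, i.e.\ with trivial weight vector field $\n=0$. In those coordinates:
\begin{itemize}
\item $\dd\alpha=\sum_i\dd p_i\wedge\dd q^i+\sum_l\epsilon^l\dd y^l\wedge\dd y^l$ visibly has constant rank $2r+s$;
\item in the precontact case, $\alpha$ contains $\dd z$ with $z$ a coordinate not appearing in $\dd\alpha$, so $\gen{\alpha}$ meets $\Im(\flat_{\dd\alpha})$ trivially;
\item in the presymplectic potential case, $\alpha$ lies in the span of the $\dd q^i,\dd p_i,\dd y^l$, which is $\Im(\flat_{\dd\alpha})$.
\end{itemize}

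The main obstacle is that the Darboux theorem above relies on regularity of $\alpha$, and one must make sure the trichotomy does not degenerate pointwise. For example, $\gen{\alpha}\cap\Im(\flat_{\dd\alpha})$ could be trivial at some points and not at others. This cannot happen, because the corank of $\chi(\alpha)$ is constant. If the intersection changed type, then either $\rank\Im(\flat_{\dd\alpha})$ would jump by one, or the rank of the sum would jump, contradicting constancy.

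Alternatively, one can argue directly, without Darboux. Since $\alpha$ is basic, we have $\dd\alpha=\pi^*\dd\alpha_\red$ locally. By the definition of precontact form or presymplectic potential, $\dd\alpha_\red$ has constant rank on the quotient. Pulling back by the submersion $\pi$ preserves rank, so $\dd\alpha$ has constant rank and is presymplectic.
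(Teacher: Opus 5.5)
Your route is the one the paper intends: its proof is only the remark that the statement follows ``from either the definitions or the local expressions''. For the two bulleted equivalences it works. $(\Leftarrow)$ follows from $\big(\chi(\alpha)\big)^\circ=\gen{\alpha}+\operatorname{Im}(\flat_{\dd\alpha})$ and the reduction to $M/\chi(\alpha)$. $(\Rightarrow)$ follows from the normal forms of Theorem~\ref{theorem:Darboux_one_forms} with $\nabla=0$, which apply directly because ``precontact'' or ``presymplectic potential'' already fixes $\rank\flat_{\dd\alpha_{\red}}$.

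The gap is in your ``main obstacle'' paragraph. That paragraph is what the headline implication ``$\alpha$ regular $\Rightarrow$ $\dd\alpha$ presymplectic'' needs, because a merely regular form must first be shown to be of one of the two types before that theorem, or your reduction argument, can be used. Constancy of the corank of $\chi(\alpha)$ only says that $\gen{\alpha}+\operatorname{Im}(\flat_{\dd\alpha})$ has constant rank $c$. A switch from $\alpha\notin\operatorname{Im}(\flat_{\dd\alpha})$ to $\alpha\in\operatorname{Im}(\flat_{\dd\alpha})$, together with a jump of $\rank\flat_{\dd\alpha}$ from $c-1$ to $c$, is compatible with that. Excluding it is precisely the constancy of $\rank\flat_{\dd\alpha}$ you are trying to prove, so the argument is circular.

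The missing ingredient is a parity constraint. A skew-symmetric form has even rank, so $\rank\flat_{\dd\alpha}$ cannot change by exactly one and is therefore determined by $c$. In the graded setting: for even $\alpha$, the even--even block of $\dd\alpha$ is skew, hence of even rank, and $\alpha$ only affects the even part of the rank. For odd $\alpha$, the image of $\flat_{\dd\alpha}$ has rank $(k|k)$ and $\alpha$ only affects the odd part. Your alternative argument via $\alpha_{\red}$ has the same hole, since it presupposes that a regular form is precontact or a presymplectic potential.

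Your $(\Leftarrow)$ direction also tacitly assumes that one of the two bullet conditions holds on the whole domain, and this cannot be dropped: the unqualified ``if'' of the first sentence fails. On $\RR^3$ take $\alpha=p\,\dd q+(1+p^2)\,\dd z$. It is nowhere vanishing, and $\dd\alpha=\dd p\wedge(\dd q+2p\,\dd z)$ has constant rank $2$. However, $\ker\dd\alpha$ is spanned by $v=-2p\,\partial_q+\partial_z$ with $\alpha(v)=1-p^2$. Reading $\chi(\alpha)$ fibrewise, as in the annihilator formula you use, it has rank $0$ for $p^2\neq1$ and rank $1$ at $p=\pm1$, so $\alpha$ is not regular near $p=\pm1$. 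Once the parity step is added, what your argument establishes is the two bullets together with the implication ``regular $\Rightarrow$ $\dd\alpha$ presymplectic''.
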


\begin{remark}
  The zero section of the tangent bundle $\n=0_M$ makes any supermanifold a homogeneity supermanifold. Consequently, Theorems \ref{theorem:Darboux_presymplectic} and \ref{theorem:Darboux_one_forms} provide the canonical local expressions for any presymplectic form, precontact form or presymplectic potential on a supermanifold, as long as they have (locally) constant class and a defined parity.
\end{remark}

\section{Examples of homogeneous Pfaffian forms}\label{sec:examples}

\subsection{Standard Darboux theorems}\label{subsec:standard_Darboux}

Any (super)manifold can be regarded as a homogeneity (super)manifold by endowing it with the trivial weight vector field $\n \equiv 0$. Hence, the standard Darboux theorems for presymplectic forms and regular one-forms are particular cases of Theorems~\ref{theorem:Darboux_presymplectic} and \ref{theorem:Darboux_one_forms}, respectively. More precisely, we have the following:

\begin{proposition}[Darboux theorems on supermanifolds]\label{proposition:Darboux_standard}
  Let $M$ be a supermanifold, and let $x_0\in \vert M \vert$ be an arbitrary point. Consider a regular one-form $\alpha$ and a presymplectic potential $\omega$ with a well-defined parity $\sigma\in \ZZ_2$ (i.e., they are either even or odd).
   \begin{enumerate}
    \item If $\rank \omega = 2r +s$, there exists a system of homogeneous coordinates $(q^i, p_i, y^l, x^a)$ such that
    $$\omega = \sum_{i=1}^r \dd p_i \wedge \dd q^i + \sum_{l=1}^s  \epsilon^l \dd y^l \wedge \dd y^l\, ,$$
    where $k=\dim M -2r-s$ and $\epsilon^l = \pm 1$.
    \item If $\alpha$ is a precontact form of class $2r+s+1$, there exists a system of local coordinates $(q^i, p_i, z, y^l, x^a)$ around $x_0$ such that
    $$\alpha=\dd z +  \sum_{i=1}^r  p_i \dd q^i + \sum_{l=1}^s  \epsilon^l y^l \dd y^l, \quad \epsilon^l= \pm 1\, .$$
    \item If $\alpha$ is a presymplectic potential of class $2r+s$,  there exists a system of local coordinates $(q^i, p_i, y^l, x^a)$ around $x_0$ such that
    $$\alpha = \sum_{i=1}^r  p_i \dd q^i  + \sum_{l=1}^s  \epsilon^l y^l \dd y^l, \quad \epsilon^l= \pm 1\, .$$

   \end{enumerate}
   The coordinates $(y^l)$ only appear if $\alpha$ is even, i.e., $\sigma = 0$.
\end{proposition}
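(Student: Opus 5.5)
The plan is to obtain Proposition~\ref{proposition:Darboux_standard} as the special case $\n\equiv 0$ of Corollary~\ref{theorem:Darboux_presymplectic} and Theorem~\ref{theorem:Darboux_one_forms}. First, equip $M$ with the trivial weight vector field $\n=0$. This is indeed a weight vector field: in any local chart $(x^i)$ we can write $0=\sum_i 0\cdot x^i\pa_{x^i}$, so every coordinate system is a homogeneity chart with all weights equal to $0$. With respect to this structure every tensor field is homogeneous of weight $0$, since $\liedv{0}K=0=0\cdot K$. Therefore any form with a well-defined parity $\sigma$ is homogeneous of degree $(\sigma,0)$. Moreover, ``homogeneous coordinates'' simply means arbitrary local coordinates. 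This is why the statement may keep the word ``homogeneous'' in item 1 without any real restriction.

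Next, the hypotheses on the base point hold automatically. Since $\n(x_0)=0$ at every $x_0\in|M|$, we are always in the first alternative of Corollary~\ref{theorem:Darboux_presymplectic} and Theorem~\ref{theorem:Darboux_one_forms}. The problematic case $\n(m)\neq0$ with $\n(m)\in\ker\omega_m$ (or $\n(m)\in\chi(\alpha)_m$) never occurs.

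We then apply the earlier results item by item. Item 1 concerns $\omega$; the statement calls it a presymplectic potential, but the displayed formula shows the intended meaning is a presymplectic form of rank $2r+s$. For it, Corollary~\ref{theorem:Darboux_presymplectic} gives the claimed expression directly. For item 2, $\alpha$ is precontact and $\n(x_0)=0$, so case 1 of Theorem~\ref{theorem:Darboux_one_forms} applies. This yields the form $\dd z+\sum p_i\dd q^i+\sum\epsilon^l y^l\dd y^l$, and case 2 with $\dd z/z$ is excluded. For item 3, case 3 of that theorem applies. In every item the $y^l$ appear only when $\sigma=0$, exactly as in the cited results.

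The main obstacle is checking that nothing in the earlier proofs implicitly used $\n\neq0$ or nonzero weights. In particular, Lemma~\ref{lemma:Poincare}(1) and the Homogeneous Frobenius Theorem~\ref{theorem:Frobenius}(a) must be applicable with all weights zero. Case (a) of Theorem~\ref{theorem:Frobenius} is stated for $\n(m)=0$, and the Poincaré lemma in case 1 only requires $\n(m)=0$. So I expect no issue, and the proposition follows immediately.
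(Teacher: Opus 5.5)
Your proposal is correct and follows the paper's own argument. The paper likewise endows $M$ with the trivial weight vector field $\nabla\equiv 0$, under which every chart is homogeneous, every form of definite parity is homogeneous of weight $0$, and $\nabla(x_0)=0$ at every point, and then reads off the three items from Corollary~\ref{theorem:Darboux_presymplectic} and cases 1 and 3 of Theorem~\ref{theorem:Darboux_one_forms}. You are also right that "presymplectic potential $\omega$" in item 1 should read "presymplectic form".
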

\no As a particular case, for a contact form we recover \cite[Theorem 4.2]{Grabowski:2013}.

\subsection{Non-trivial homogeneous forms of weight zero}

Consider $\RR^3$ with the Cartesian coordinates $(x, y, z)$, with degrees $1, -1$ and $0$, respectively. In other words, $\RR^3$ is endowed with the weight vector field $\n = x \pa_x - y \pa_y$. Let $\varphi_i\colon\RR^2\to \RR$ be arbitrary smooth functions. Then, any functions $f_i\colon \RR^3 \to \RR$ of the form
$$f_i(x, y, z) = \varphi_i(xy, z)$$
are homogeneous of weight $0$. Assume that
$$\restr{\frac{\partial}{\partial v} \varphi_3(u, v)}{(0,0)} \neq 0\, .$$
Then, any functions of the form
$$q = x (1+f_1)\, , \quad p = y (1 + f_2)\, , \quad \zeta = f_3$$
form a new system of homogeneous coordinates around $(0,0,0)$, which also have weights $\{1, -1, 0\}$, i.e.,~$\n=q\pa_q - p \pa_p$.
Then,
$$\eta = \dd \zeta + p \dd q$$
is a (local) homogeneous contact form of weight $0$, and $(q, p, \zeta)$ are homogeneous Darboux coordinates around the origin. In canonical coordinates $(x, y, z)$, the expression of $\eta$ can be quite involved. For instance, for
$$\varphi_1(u,v) = \cos u\, , \quad \varphi_2(u,v) = \sin v\, , \quad \varphi_3(u,v) = e^v \cos u\, ,$$
we have that
\begin{align*}
  \eta & = y \Big(1+\sin z + \cos (x y) (1+\sin z) -\sin (x y) \big(e^z+x y (1+\sin z)\big) \Big)\, \dd x \\
  & \quad -x \sin (x y) \Big(x y (\sin z+1)+e^z\Big)\, \dd y + e^z \cos (x y)\,  \dd z \, .
\end{align*}
Observing this expression, the existence of coordinates $(q, p, \zeta)$ in which both $\eta$ and $\n$ have a canonical form is a priori not obvious. Nevertheless, the existence of such coordinates is guaranteed by Theorem~\ref{theorem:Darboux_one_forms}.

\mn
Similarly, the one-form
\begin{equation}\label{eq:example_symp_potential_weight_zero}
  \theta = y \big(\cosh (x y)+1\big) \big(\sinh (x y)+x y \cosh (x y)+1\big)\, \dd x
  + x^2 y \cosh (x y) \big(\cosh (x y)+1\big)\, \dd y
\end{equation}
on $\RR^2$ is homogeneous of weight $0$ with respect to  $\n = x \pa_x - y \pa_y$. Moreover, one can verify that it is a symplectic potential. Theorem~\ref{theorem:Darboux_one_forms} thus ensures the existence of an atlas formed by systems of homogeneous Darboux coordinates $(q, p)$ such that $\theta = p\, \dd q$. In this case, an explicit change of homogeneous coordinates is given by
\begin{equation}\label{eq:example_symp_potential_weight_zero_coords}
  q = x \big(1+ \sinh (x y)\big)\, , \quad p = y \big(1+ \cosh (x y)\big)\, .
\end{equation}

\subsection{Exact symplectic forms}

A symplectic form $\omega$ on a manifold $M$ is exact if there exists a one-form $\alpha$ such that $\omega = \dd \alpha$. Since $\omega$ defines a $\Cinfty(M)$-module isomorphism between $\X(M)$ and $\Omega^1(M)$, one can define a unique vector field $\n_\alpha\in \X(M)$ by $i_{\n_\alpha} \omega = \alpha$, which is called the Liouville vector field. Conversely, given a vector field $X$ such that $\liedv{X} \omega = \omega$, one can define a symplectic potential $\tilde{\alpha} \coloneqq i_{X} \omega$.

By the standard Darboux theorem (see Proposition~\ref{proposition:Darboux_standard}), there exist local coordinates $(q^i, p_i)$ in which $\alpha$ reads
$$\alpha = \sum_{i=1}^r  p_i \dd q^i$$
Thus,
$$\omega = \dd \alpha = \sum_{i=1}^r  \dd p_i\wedge \dd q^i\, , \quad \n_\alpha = \sum_i p_i \pa_{p_i}\, .$$
In particular, $\n_\alpha$ is of the form~\eqref{eq:weight_vector_field}. Thus,
$\n_\alpha$ is a weight vector field, and $\alpha$ is homogeneous of weight $1$ with respect to it.

In the case of the cotangent bundle $\cT N$ of a manifold $N$ equipped with the canonical one-form $\alpha_N$, the corresponding Liouville vector field $\n_{\alpha_N}$ is the infinitesimal generator of homotheties on the fibers, i.e., the Euler vector field $\n_{\mathcal{E}}$. Any exact symplectic manifold $(M, \alpha)$ is locally isomorphic to $(\cT N, \alpha_N)$. More precisely, around each point of $M$, there exists a local diffeomorphism $\Phi\colon M \to \cT N$
such that
$$\Phi^\ast \alpha_N = \alpha\, , \quad \Phi_\ast \n_{\alpha} = \n_{\mathcal{E}}\, .$$
Indeed, let $(x^i)$ be a system of local coordinates on $N$, and let $(x^i, \xi_i)$ be the induced system of bundle coordinates on $\cT N$. Then, we can write
$$\alpha_N = \sum_i \xi_i \dd x^i\, , \quad \n_{\mathcal{E}} = \sum_i \xi_i \pa_{\xi_i}\, ,$$
and consider the local diffeomorphism
$$\Phi(q^i, p_i) = (x^i, p_i)\, .$$

\mn
Furthermore, a homogeneity manifold $(M, \n)$ equipped with a homogeneous symplectic potential $\alpha$ is a natural example of a bi-homogeneity manifold, i.e.,~a manifold equipped with two commuting weight vector fields. Indeed, it is straightforward to check (e.g.,~using a system of homogeneous Darboux coordinates) that $\n$ commutes with the Liouville vector field $\n_{\alpha}$ defined by $\alpha$. For instance, the homogeneous symplectic form $\omega = \dd \theta$, with $\theta$ defined in~\eqref{eq:example_symp_potential_weight_zero}, is homogeneous of weight $0$ with respect to $\n$, and of weight $1$ with respect to $\n_\theta$, where
$\n=q\pa_q - p\pa_p$ and $\n_\theta = p \pa_p$ in coordinates~\eqref{eq:example_symp_potential_weight_zero_coords}.

\subsection{Homogeneous Pfaffian equations}

A Pfaffian equation is a differential equation that can be expressed as the distribution given by the kernel of a regular one-form $\alpha$ on a manifold $M$, namely, $D=\ker \alpha$. The class of $\alpha$ characterizes the integrability of the associated Pfaffian equation.

If $(M, \n)$ is a homogeneity manifold, and $\alpha$ is homogeneous, then $D$ is a homogeneous distribution. Homogeneous Darboux coordinates for $\alpha$ provide the solutions for $D$, i.e., its maximal integral submanifolds:
\begin{itemize}
    \item If $\alpha$ is a presymplectic potential of class $2r+s$,
    $$\alpha = \sum_{i=1}^r  p_i \dd q^i\, ,$$
   and the submanifolds given by $\{q^i = c_i,\, z = c_0\}$ for constants $c_0,c_i\in \RR$ are solutions of $D$.
    \item If $\alpha$ is a precontact form of class $2r+s+1$ and $\n(x_0) = 0$ or $w\neq 0$,
    $$\alpha=\dd z +  \sum_{i=1}^r  p_i \dd q^i\, ,$$
    and $\{q^i = c_i,\, z = c_0\}$ for constants $c_0,c_i\in \RR$ are solutions of $D$.
    \item If $\alpha$ is a precontact form of class $2r+s+1$, $\n(x_0) \neq 0$ and $w= 0$,
    $$ \alpha=\frac{\dd z}{z}+  \sum_{i=1}^r  p_i \dd q^i\, ,$$
   and $\{q^i = c_i,\, z = c_0\}$ for constants $c_0,c_i\in \RR$ are solutions of $D$.
\end{itemize}
Note that the flow of $\n$ maps each particular solution onto another solution.

For instance, the First Law of Thermodynamics states that equilibrium configurations of a thermodynamic system are the solutions to the Pfaffian equation $D=\ker \alpha$, where $\alpha$ is the Gibbs contact form
$$\alpha = \dd U - \sum_i p_i \dd q^i\, ,$$
where $U$ is the internal energy, and $(q^i, p_i)$ are thermodynamic conjugate variables (such as volume and pressure, or number of molecules and chemical potential). Thermodynamically meaningful variables are either intensive or extensive.
We can declare the intensive (resp.~extensive) variables $p_i$ (resp.~$q^i$ and $z$) to be homogeneous of weight $0$ (resp.~$1$), i.e., define the weight vector field $\n = U \pa_U + \sum_i p_i \pa_{p_i}$. In this way, any other system of homogeneous coordinates consists of variables that are either intensive or extensive.

\section*{Acknowledgements}

The authors thank Paweł Urbański for his valuable comments on this manuscript. They are also grateful to the referee for their suggestions.

\section*{Declarations}

\subsection*{Funding}
The research of JG was funded by the National Science Centre (Poland) within the project WEAVE-UNISONO, No. 2023/05/Y/ST1/00043.

\subsection*{Competing interests}

The authors have no competing interests to declare.

\bibliographystyle{plain}
\bibliography{biblio-final}

\vskip.5cm
\noindent Janusz Grabowski\\\emph{Institute of Mathematics, Polish Academy of Sciences}\\{\small ul. \'Sniadeckich 8, 00-656 Warszawa,
Poland}\\{\tt jagrab@impan.pl}\\  https://orcid.org/0000-0001-8715-2370
\\

\noindent Asier L\'opez-Gord\'on\\\emph{Institute of Mathematics, Polish Academy of Sciences}\\{\small ul. \'Sniadeckich 8, 00-656 Warszawa,
Poland}\\{\tt alopez-gordon@impan.pl}\\  https://orcid.org/0000-0002-9620-9647
\\

\end{document}